\documentclass{birkjour}
\usepackage{amsmath}
\usepackage{amssymb}
\usepackage{amsthm}

\usepackage{verbatim}

\usepackage[utf8]{inputenc}

\usepackage{graphicx}
\usepackage{color}
\usepackage{enumerate}

\usepackage{esint}
\usepackage[all]{xy}
\usepackage{framed}
\usepackage{bbm}
\usepackage{subcaption}

\usepackage[super]{nth}

\usepackage{bm}

\theoremstyle{plain}
\newtheorem{theorem}{Theorem}[section]
\newtheorem{corollary}[theorem]{Corollary}
\newtheorem{lemma}{Lemma}[section]

\newtheorem{proposition}{Proposition}[section]

\theoremstyle{definition}
\newtheorem{definition}{Definition}[section]
\newtheorem{remark}{Remark}[section]

\numberwithin{equation}{section}

\newcommand{\bfd}{\mathbf{d}}

\newcommand{\bfx}{\mathbf{x}}

\newcommand{\bff}{\mathbf{f}}

\newcommand{\bfp}{\mathbf{p}}

\newcommand{\Hu}{\mathcal{H}u}
\newcommand{\Hw}{\mathcal{H}w}

\newcommand{\hu}{\widehat{\nabla u}}
\newcommand{\hv}{\widehat{\nabla v}}

\newcommand{\dd}{\mathrm{\,d}}

\newcommand{\hp}{\hat{\bfp}}

\newcommand{\mR}{\mathbb{R}}

\DeclareMathOperator{\sgn}{sgn}

\DeclareMathOperator{\dist}{dist}

\begin{document}

\author[\normalcolor K. K. Brustad]{Karl K. Brustad. \today}
\title{Infinity-harmonic functions in the plane:\\Regularity by injectivity}
\email{karl.brustad@momentumtechnologies.com}
\address{Momentum Technologies\\ Ranheimsveien 9\\ 7044 Trondheim\\ Norway}

\begin{abstract}
	It has been a long standing conjecture that the $ \infty $-harmonic functions in the plane have a 1/3-Hölder continuous gradient. It \emph{is} known that solutions are $ C^1 $ and that the gradient is locally $ \alpha $-Hölder, but $ \alpha $ comes without any positive lower bound. Aronsson's solution $ x^{4/3} - y^{4/3} $ shows that no better general regularity is possible.
	
	In the plane there is also a connection between the $ \infty $-Laplace equation and the one-dimensional heat equation, observed already by Aronsson himself. I shall show that this link can be accessed under a certain injectivity condition on the gradient, and that the caloric structure then is enough to prove the 1/3-Hölder continuity. Of course, an injective gradient is by no means a \emph{necessary} condition, as seen by smooth solutions such as the planes and cones.
\end{abstract}

\maketitle


\section{Introduction}
A function $ u\colon\Omega\to\mR $ is said to be $ \infty $-harmonic in $ \Omega\subseteq\mR^n $ if
\[ \Delta_\infty u = 0 \]
in the viscosity sense.
The $ \infty $-Laplace operator is $ \Delta_\infty u := \nabla u\Hu\nabla u^\top $ 
where $ \nabla u = [u_{x_1}, \dots, u_{x_n}]\colon\Omega\to\mR^n $ is the gradient and where $ \Hu = \left[u_{x_i, x_j}\right]_{i,j=1}^n\colon\Omega\to\mR^{n\times n} $ is the Hessian matrix of $ u $. In the plane the equation reads
\[ u_x^2u_{xx} + 2u_xu_yu_{xy} + u_y^2u_{yy} = 0. \]

\begin{theorem}
	Suppose $ u $ is $ \infty $-harmonic in an open $ \Omega\subseteq\mR^2 $ with $ \nabla u\neq 0 $.
	
	If
	\begin{equation}\label{eq: reverse lipschitz}
		|\nabla u(x) - \nabla u(y)| \geq C_1|x - y|,\qquad C_1>0,\;x,y\in\Omega,
	\end{equation}
	then, locally in $ \Omega $, there is a constant $ C_2>0 $ such that
	\begin{equation}\label{eq: hølder}
	|\nabla u(x) - \nabla u(y)| \leq C_2|x - y|^{1/3}.
	\end{equation}
\end{theorem}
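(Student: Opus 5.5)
Since \eqref{eq: reverse lipschitz} makes $\nabla u$ injective, and $\nabla u$ is continuous (planar $\infty$-harmonic functions are $C^1$), I will treat the gradient as a change of variables. On a subdomain $D\Subset\Omega$, invariance of domain makes $\nabla u$ a homeomorphism onto an open set $G=\nabla u(D)\subseteq\mR^2\setminus\{0\}$. Its inverse $\bfx(\bfp)$ is Lipschitz with constant $1/C_1$, so it is differentiable a.e. with bounded Jacobian. Aronsson's hodograph transformation turns the $\infty$-Laplace equation into a linear equation for suitable data as functions of $\bfp$. Write $\bfp = \rho(\cos\theta,\sin\theta)$ and use the Legendre-type function $w(\bfp)=\bfx\cdot\bfp-u(\bfx)$, so that $\nabla_{\bfp}w=\bfx$. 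The equation $\nabla u\Hu\nabla u^\top=0$ states that $|\nabla u|$ is constant along gradient flow lines. In the $(\theta, s)$ variables, with $s$ a suitable function of $\rho$ such as $s=\log\rho$ or a power, it becomes a one-dimensional heat equation $w_s = w_{\theta\theta}$ (up to lower-order terms that a change of unknown removes), possibly backward in time. My first step is to derive this caloric equation rigorously. In the smooth case it is a computation. In general I would use the Lipschitz inverse to justify it weakly: test against smooth compactly supported functions in $G$ and change variables back to $D$, where the viscosity equation for $C^1$ solutions with nonvanishing gradient gives the needed identity. Weak solutions of the heat equation are then smooth by Weyl-type hypoellipticity.

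Once $w$ (equivalently $\bfx$) is a smooth caloric function of $(\theta,s)$, the regularity of $\nabla u$ becomes a statement about inverting a smooth map. The Hölder bound \eqref{eq: hølder} is equivalent to a reverse estimate for the inverse: $|\bfx(\bfp)-\bfx(\bfq)|\ge c|\bfp-\bfq|^3$ locally. I will therefore study the degeneracy of the Jacobian of $\bfp\mapsto\bfx(\bfp)=\nabla_{\bfp}w$, namely the Hessian of $w$ in $\bfp$. The injectivity of $\nabla u$ makes $\bfx$ injective, and its continuous inverse forces $\det D\bfx$ to have a sign. The claim is that a smooth injective map of this caloric form vanishes to order at most $3$. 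The mechanism is Aronsson's example $x^{4/3}-y^{4/3}$: its hodograph is $\bfx\sim(p^3,q^3)$, which is exactly cubic, so cubic degeneracy is the worst case.

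The key step, and the main obstacle, is a quantitative structural lemma: at a point where the Hessian of $w$ degenerates, the caloric equation prevents vanishing of order higher than cubic in the direction where injectivity matters. I would Taylor-expand $w$ at such a point $(\theta_0,s_0)$ using caloric polynomials, i.e. heat polynomials in $\theta$ and $s$ with parabolic scaling. Injectivity of $\bfx$, together with the sign of the Jacobian, excludes the even-order leading terms. The constraint $w_s=w_{\theta\theta}$ links the $\theta$-order to the $s$-order. Together these should show that the first nonvanishing parabolic-homogeneous part of $\bfx$ has degree at most $3$, with a nondegenerate cubic leading term like $\theta^3+6\theta s$, the heat polynomial. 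If a nonzero term of degree at most $3$ always survives, compactness on $\overline D$ turns it into a uniform lower bound $|\bfx(\bfp)-\bfx(\bfq)|\ge c|\bfp-\bfq|^3$. Inverting this gives \eqref{eq: hølder} with $C_2=c^{-1/3}$.

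I am unsure whether the reverse Lipschitz hypothesis truly rules out higher-degree vanishing, or whether some extra information from the heat equation, such as the strong maximum principle or the analyticity of caloric functions in $\theta$, is needed. That is where I expect the real work to lie. The hypothesis $\nabla u\neq0$ is used to keep $\rho$ away from $0$, so that the change to $(\theta,s)$ is smooth.
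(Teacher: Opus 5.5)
\textbf{There is a genuine gap at the crux.} Your outline matches the paper's: the Legendre potential $w$ of the Lipschitz inverse, the coordinates $p=e^{-t}(\cos\psi,\sin\psi)$ in which $h=w\circ\bfp$ is caloric, heat polynomials, and the reduction of the $1/3$-Hölder bound to $|\bff(p+k)-\bff(p)|\ge C|k|^3$. But the step you leave open is the heart of the proof, and the mechanism you sketch would not close it. A smooth homeomorphism can be flat to any order, so injectivity of $\bfx$ and a sign for its Jacobian give no bound on the order of degeneracy. Nor is it the even orders that get excluded.

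\textbf{The missing nondegeneracy argument.} The idea you need is to pass to a second caloric function, $v:=u\circ\bfx=-h-h_t$, and to use $\nabla u\neq0$ topologically; you use it only to keep $|p|$ away from $0$.
\begin{enumerate}[(a)]
\item Since $u\in C^1$ and $\nabla u(x_0)\neq0$, the zero set of $u-u(x_0)$ near $x_0$ is a single $C^1$ arc. So $u-u(x_0)$ changes sign exactly twice on small loops around $x_0$. Because $\bfx$ is a homeomorphism, the same holds for $v-v(\psi_0,t_0)$ on small loops around $(\psi_0,t_0)$.
\item Suppose $v_\psi=v_t=0$ there. The parabolic Taylor expansion $\sum a_nr_n$ of $v-v(\psi_0,t_0)$ in heat polynomials is not identically zero: otherwise $\bfx(\cdot,t_0)$ would be constant, and $v(\cdot,t_0)$ is real-analytic. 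So it starts with some $a_Nr_N$ with $N\ge3$.
\item The factorization $r_N=c\,\psi^{\sigma_N}\prod_k(t+b_{N,k}\psi^2)$ gives $2\lceil N/2\rceil\ge4$ nodal domains, hence at least four sign changes on small parabolic ellipses. This contradicts (a).
\end{enumerate}
Hence $v_t\neq0$ wherever $v_\psi=0$: the leading term is always $a_2r_2$, and every $N\ge3$, odd or even, is excluded.

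\textbf{Why this suffices.} From $\bfx_\psi=e^tv_\psi\hp$ and $\bfx_t=e^t(v_t\hp-v_\psi\hp_\perp)$ one gets $\det\Hw=-e^{4t}v_\psi^2$, so $\nabla u$ is smooth where $v_\psi\neq0$. Where $v_\psi=0$, one gets $\Hw=\pm\alpha\hp\hp^\top$ with $\alpha>0$. A third-order expansion, using derivatives of $p_\perp^\top\Hw(p)p_\perp=0$, then handles the cone $|\hat k^\top\hp|\le C|k|$, the only directions where the linear term does not dominate. There the $\hp_\perp$-component of $\bff(p+k)-\bff(p)$ equals $\tfrac13e^{4t}v_t|k|^3+O(|k|^4)$. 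Without the nodal-domain step you have no reason why a term of order at most $3$ survives.

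\textbf{The caloric equation is a second gap.} It cannot be obtained the way you propose. A $C^1$ viscosity solution of the non-divergence equation $\Delta_\infty u=0$ gives no distributional identity that can be tested and pulled back to the hodograph plane. Also, ``$|\nabla u|$ is constant along gradient lines'' is only formal at this regularity. Directly, you get $p_\perp^\top\Hw(p)p_\perp=0$ only where $\bff$ is differentiable with \emph{invertible} Jacobian. There $u$ is twice differentiable, and $p^\top\Hw^{-1}p=0$ is equivalent to $p_\perp^\top\Hw p_\perp=0$. On the set where the a.e.\ Jacobian is singular, which might a priori have positive measure, you get nothing. So $h_t=h_{\psi\psi}$ a.e., and hence Weyl's lemma, is not yet available.

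\textbf{How the paper closes it.} It uses a rigorous one-sided substitute for constant speed. Via Crandall's cone-comparison estimates, through every $x_0$ passes a $C^1$ streamline of $\hu$ along which $|\nabla u|\ge|\nabla u(x_0)|$. The level curve $\psi\mapsto\bfx(\psi,t_0)$ of $|\nabla u|$ therefore touches this streamline at $x_0$ without crossing it. This forces $\bfx_\psi\cdot\bfp_\perp=0$ wherever $\bfx_\psi$ exists, trivially so if $\bfx_\psi=0$. Integrating $h_{\psi\psi}=\bfx_\psi^\top\bfp_\perp-\bfx^\top\bfp$ then gives $h_{\psi\psi}=-\bfx^\top\bfp=h_t$ classically. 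You need this argument, or a genuine substitute, before your regularity step can start.
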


The condition \eqref{eq: reverse lipschitz} -- sometimes called \emph{metric injectivity} -- clearly implies that the gradient $ \nabla u\colon\Omega\to \mR^2 $ is 1-1 onto $ B := \nabla u(\Omega) $, with a $ C_1^{-1} $-Lipschitz inverse $ \bff\colon B\to\Omega $.

\begin{remark}
	It is an interesting question whether the caloric realm can be accessed also with \eqref{eq: reverse lipschitz} weakened to a mere injectivity of the gradient. I think it probably does, but the proof will require better versions of Lemma \ref{lem: existence of potential} and Proposition \ref{prop: h is caloric}.
\end{remark}

\begin{remark}
	This paper is limited to the situation $ \nabla u\neq 0 $. It is perhaps possible to also include critical points, but, as we shall see, it would then be necessary to analyse the behaviour of the associated caloric functions in the limit $ t\to \infty $.
\end{remark}

\section{Proof of Theorem}

\begin{lemma}\label{lem: existence of potential}
	A Lipschitz inverse $ \bff $ of a continuous gradient field $ \nabla u $ is again a gradient field. A potential $ w\in C^{1,1} $ is given by
	\begin{equation}\label{eq: w definition}
		w(p) := p^\top \bff(p) - u(\bff(p)).
	\end{equation}
\end{lemma}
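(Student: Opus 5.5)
We show directly that $w$ is differentiable on $B$ with $\nabla w = \bff^\top$. Since $\bff$ is Lipschitz, this makes $w\in C^{1,1}$, and $\bff$ is then a gradient field with potential $w$. This is the Legendre transform argument, but without convexity. The only inputs are $C^1$ regularity of $u$ (the gradient is assumed continuous), the identity $\nabla u(\bff(p)) = p$, and the Lipschitz bound on $\bff$. Here $B=\nabla u(\Omega)$ is open: $\nabla u$ is a continuous injection of the open set $\Omega\subseteq\mR^2$, so invariance of domain applies. This is needed for differentiability to make sense at every $p\in B$.

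Fix $p\in B$ and let $q\in B$ be close to $p$. Write $x=\bff(p)$ and $y=\bff(q)$, so that $\nabla u(x)=p$ and $\nabla u(y)=q$, and $|y-x|\le C_1^{-1}|q-p|$. Then
\[
w(q)-w(p) = q^\top y - p^\top x - \bigl(u(y)-u(x)\bigr)
= (q-p)^\top x + q^\top(y-x) - \bigl(u(y)-u(x)\bigr).
\]
Since $u\in C^1$, the mean value theorem along the segment $[x,y]$ gives
\[
u(y)-u(x) = \int_0^1 \nabla u\bigl(x+s(y-x)\bigr)\dd s\,(y-x).
\]
For $q$ close to $p$ the point $y$ is close to $x$ by the Lipschitz bound, so the segment lies in $\Omega$. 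Therefore
\[
w(q)-w(p) - (q-p)^\top x = \int_0^1 \bigl(q - \nabla u(x+s(y-x))\bigr)\dd s\,(y-x).
\]
The integrand is at most $|q-p| + \sup_{z\in[x,y]}|\nabla u(z)-\nabla u(x)|$, using $\nabla u(x)=p$. The prefactor $|y-x|$ is at most $C_1^{-1}|q-p|$.

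Since $\nabla u$ is continuous at $x$ and $y\to x$ as $q\to p$, the supremum tends to $0$. Hence the remainder is $o(|q-p|)$, and $w$ is differentiable at $p$ with $\nabla w(p)=\bff(p)^\top$. Because $\bff$ is $C_1^{-1}$-Lipschitz, $\nabla w$ is Lipschitz, so $w\in C^{1,1}$ locally, and globally wherever $B$ is convex.

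The main point to get right is the error estimate. Lipschitz continuity of $\bff$ is what converts the mere continuity of $\nabla u$ into an $o(|q-p|)$ remainder: it gives both $|y-x|=O(|q-p|)$ and $y\to x$. The other point needing care is that $B$ is open and that the segment $[x,y]$ stays inside $\Omega$; both hold for $q$ near $p$.
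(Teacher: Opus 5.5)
Your proof is correct and is essentially the paper's argument. Both split $w(q)-w(p)-(q-p)^\top\bff(p)$ into $(q-p)^\top(y-x)$ minus the first-order Taylor remainder of $u$ at $x=\bff(p)$, and both use the bound $|y-x|\le C_1^{-1}|q-p|$ to make that remainder $o(|q-p|)$. The differences are cosmetic: you write the remainder with the integral mean value formula along $[x,y]$, where the paper only uses differentiability of $u$ at $x$; you state the openness of $B$ (via invariance of domain) explicitly, where the paper leaves it implicit; and your convexity caveat is unnecessary, since $\nabla w=\bff^\top$ is $C_1^{-1}$-Lipschitz on all of $B$.
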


Note that the result is immediate if $ \bff $ is differentiable with Jacobian matrix $ \nabla\bff $. Because then
\[ \nabla w(p) = \bff^\top(p) + p^\top\nabla\bff(p) - \nabla u(\bff(p))\nabla\bff(p) = \bff^\top(p), \]
since $ \nabla u(\bff(p)) = p^\top $.

\begin{proof}
	I show that $ w(p+h) - w(p) = h^\top\bff(p) + o(|h|) $ as $ \mR^2\ni h\to 0 $.
	\begin{align*}
		w(p+h) - w(p) - h^\top\bff(p)
		&= (p+h)^\top \bff(p+h) - u(\bff(p+h))\\
		&\quad{} - p^\top \bff(p) + u(\bff(p)) - h^\top\bff(p)\\
		&= (p+h)^\top\big[\bff(p+h) - \bff(p)\big]\\
		&\quad{} - \nabla u(\bff(p))\big[\bff(p+h) - \bff(p)\big] + o(|\bff(p+h) - \bff(p)|)\\
		&= h^\top\big[\bff(p+h) - \bff(p)\big] + o(|\bff(p+h) - \bff(p)|),
	\end{align*}
	which is $ o(|h|) $ since $ |\bff(p+h) - \bff(p)|\leq C_1^{-1}|h| $. Thus $ w $ is differentiable with Lipschitz gradient $ \nabla w = \bff^\top $.
\end{proof}

Next, I introduce the smooth (and locally one-to-one) change of variables\footnote{in some branch $ D $ of $ \bfp^{-1}(B) $.} $ \bfp\colon D\to B $,
\[ p = \bfp(\psi, t) := e^{-t}\begin{bmatrix}
\cos\psi\\
\sin\psi
\end{bmatrix}, \]
and define $ h\in C^{1,1}(D) $ as
\begin{equation}\label{eq: h definition}
	h(\psi, t) := w(\bfp(\psi, t)).
\end{equation}
It is also convenient to write
\begin{equation}\label{eq: x definition}
	x = \bfx(\psi,t) := \bff(\bfp(\psi, t)).
\end{equation}
Note that
\[ \bfp_\psi = \bfp_\perp\qquad\text{and}\qquad \bfp_{\psi\psi} = -\bfp = \bfp_t, \]
where, for vectors $ a\in\mR^2 $, the subscript $ \perp $ stands for a 90 degrees counter-clockwise rotation. That is, $ a_\perp := Qa $ with $ Q :=
\left[\begin{smallmatrix}
0 & -1\\
1 & 0
\end{smallmatrix}\right]. $
It follows that
\begin{equation}\label{eq: h_psi, h_t}
	h_\psi = \nabla w(\bfp)\bfp_\psi = \bfx^\top\bfp_\perp\qquad\text{and}\qquad h_t = \nabla w(\bfp)\bfp_t = -\bfx^\top\bfp.
\end{equation}
To be clear, $ \nabla u(\bfx) = e^{-t}[\cos\psi, \sin\psi] $. Thus $ e^{-t} $ is the length $ |\nabla u(\bfx)| $ and $ \psi $ is the direction $ \arg\nabla u(\bfx) $ of the gradient.

By using \eqref{eq: h_psi, h_t} the homomorphism $ \bfx\colon D\to\Omega $ can be expressed in terms of $ h $ as
\begin{equation}\label{eq: x in terms of h}
	\begin{aligned}
	\bfx
	&= I\bfx\\
	&= \left(\hp\hp^\top + \hp_\perp\hp_\perp^\top\right)\bfx,\qquad\qquad\qquad (\hat{a} := a/|a|),\\
	&= e^{t}\left(h_\psi\hp_\perp - h_t\hp\right)\\
	&= e^t\begin{bmatrix}
	-\sin\psi & -\cos\psi\\
	\cos\psi & -\sin\psi
	\end{bmatrix}\begin{bmatrix}
	h_\psi\\
	h_t
	\end{bmatrix}.
	\end{aligned}
\end{equation}
The first of the two key results needed to prove the Theorem is the following.
\begin{proposition}\label{prop: h is caloric}
	The function $ h\colon D\to\mR $ given by \eqref{eq: h definition}, \eqref{eq: w definition} is caloric. That is, $ h\in C^\infty(D) $ with
	\[ h_{\psi\psi}(\psi, t) = h_t(\psi, t). \]
\end{proposition}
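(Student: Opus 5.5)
The plan is to verify the heat equation $h_{\psi\psi}=h_t$ almost everywhere, using the Legendre-type duality between $u$ and the potential $w$ of Lemma \ref{lem: existence of potential}, and then to upgrade to smoothness by a regularity argument for the heat equation.

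\emph{Formal computation.} Differentiating \eqref{eq: h_psi, h_t} once more, with $\bfp_\psi=\bfp_\perp$ and $\bfp_{\psi\psi}=-\bfp=\bfp_t$, gives
\[ h_{\psi\psi} = \bfp_\perp^\top \mathcal{H}w(\bfp)\,\bfp_\perp + \nabla w(\bfp)\bfp_{\psi\psi} = \bfp_\perp^\top \mathcal{H}w(\bfp)\,\bfp_\perp + h_t . \]
So everything reduces to showing $p_\perp^\top \mathcal{H}w(p)\,p_\perp = 0$. If $u$ were $C^2$ with $\Hu$ invertible, then $\nabla w=\bff^\top$ would give $\mathcal{H}w(p)=\Hu(\bff(p))^{-1}$. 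For a symmetric invertible $2\times2$ matrix $H$ one has $H^{-1}=Q H Q^\top/\det H$, and $Q^\top p_\perp=p$. Hence $p_\perp^\top H^{-1}p_\perp = p^\top H p/\det H$, and this vanishes exactly when $\Delta_\infty u=0$ at $x=\bff(p)$, since $p^\top=\nabla u(x)$.

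\emph{Rigorous a.e.\ version.} Since $w\in C^{1,1}$, Rademacher's theorem shows that $\bff=\nabla w^\top$ is differentiable at a.e.\ $p\in B$, with symmetric Jacobian $A=\mathcal{H}w(p)$. At such a $p$ I split into two cases according to whether $A$ is invertible.

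\textbf{Case $A$ invertible.} I show that $\nabla u$ is differentiable at $x=\bff(p)$ with derivative $A^{-1}$. Write $x+k=\bff(q)$. By continuity of $\nabla u$, $q\to p$ as $k\to0$. From $k=A(q-p)+o(|q-p|)$ and invertibility of $A$ we get $|q-p|=O(|k|)$, and therefore $q-p=A^{-1}k+o(|k|)$. Thus $u$ is twice differentiable at $x$. A viscosity solution satisfies the equation pointwise at points of twice differentiability: test with the second-order Taylor polynomial $\pm\varepsilon|y-x|^2$ and let $\varepsilon\to0$. This gives $p^\top A^{-1}p=0$, hence $p_\perp^\top A p_\perp=0$ by the adjugate identity above.

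\textbf{Case $A$ singular.} This is the step I expect to be the main obstacle. Here $A=\lambda ee^\top$ for a unit vector $e$, and I must rule out the situation $\lambda\neq0$ with $e\not\parallel p$. The fallback route is to show this set of $p$ is null, but Sard-type arguments for the Lipschitz map $\bff$ control measure in $x$-space rather than in $p$-space, so they do not obviously apply. I would therefore first try a direct viscosity contradiction. Expanding $w(q)=w(p)+(q-p)^\top x+\tfrac12(q-p)^\top A(q-p)+o(|q-p|^2)$ and using $u(\bff(q))=q^\top\bff(q)-w(q)$, I obtain a second-order description of $u$ along the curves $\bff(p+se_\perp)$ (where $\bff$ is stationary to first order) and $\bff(p+se)$. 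From this I would build a quadratic (or cone-plus-quadratic) test function touching $u$ at $x$ from one side whose $\infty$-Laplacian has a strict sign, contradicting $\Delta_\infty u=0$.

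\emph{From a.e.\ to smooth.} Once $h_{\psi\psi}=h_t$ a.e.\ in $D$, note that $h\in C^{1,1}(D)$ has $h_{\psi\psi}$ and $h_t$ in $L^\infty$ coinciding with its distributional derivatives. Hence $h_{\psi\psi}-h_t=0$ in $\mathcal{D}'(D)$, and Weyl's lemma for the heat operator (hypoellipticity) gives $h\in C^\infty(D)$ with $h_{\psi\psi}=h_t$ pointwise, as claimed in Proposition \ref{prop: h is caloric}.
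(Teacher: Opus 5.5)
\textbf{The gap is the singular case.} Your reduction to $p_\perp^\top\Hw(p)\,p_\perp=0$, your treatment of the invertible case, and the final upgrade via hypoellipticity are all correct. But you leave the case where $\Hw(p)$ is singular as a plan, not an argument, and it is the heart of the matter.

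From the invertible case alone you only get $h_{\psi\psi}=h_t$ a.e.\ on $\{\det\Hw\neq 0\}$. Nothing available a priori makes the degenerate set null in $p$-space. The area formula only gives $|\bff(\{\det\Hw=0\})|=0$, and the merely continuous map $\nabla u$ need not send null sets to null sets.

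The proposed test-function contradiction is also not straightforward. Suppose $\Hw(p)=\lambda ee^\top$ with $\lambda\neq0$, and write $x+k=\bff(q)$. Then $e_\perp^\top k=o(|q-p|)$. So for $k$ transverse to $e$ the gradient jump $|q-p|$ is $\gg|k|$, with $q-p$ nearly parallel to $e_\perp$ but of no controlled sign. Hence $u$ is not twice differentiable at $x$. Along transverse lines it deviates superquadratically from its tangent plane, in an uncontrolled way. There is no evident smooth function touching $u$ from one side at $x$, let alone one whose $\infty$-Laplacian has a strict sign.

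\textbf{The missing ingredient is the streamline comparison.} The paper uses it to treat every differentiability point at once, invertible or not.
\begin{enumerate}[(1)]
	\item For fixed $t_0$, the map $\psi\mapsto\bfx(\psi,t_0)$ is Lipschitz, hence differentiable for a.e.\ $\psi$. It traces the level curve $\{|\nabla u|=e^{-t_0}\}$.
	\item Since $\bfx$ is a homeomorphism, this curve locally separates $\Omega$ into $\{|\nabla u|\ge e^{-t_0}\}$ and $\{|\nabla u|\le e^{-t_0}\}$.
	\item Corollary \ref{cor: streamline touch} gives a $C^1$ curve $\gamma$ through $x_0=\bfx(\psi_0,t_0)$ with $\gamma'(0)=\hat{p}$, lying in the first set.
	\item So $\gamma$ touches the level curve without crossing it. This forces $\bfx_\psi\parallel\hat{p}$ or $\bfx_\psi=0$, i.e.\ $\bfx_\psi^\top\bfp_\perp=0$.
\end{enumerate}
Where $\bff$ is differentiable, $\bfx_\psi=\Hw(\bfp)\bfp_\perp$, so this is exactly $p_\perp^\top\Hw(p)\,p_\perp=0$. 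In your rank-one situation it rules out $\bfx_\psi=\lambda(e^\top p_\perp)e\neq0$ pointing transversally to $\hat{p}$.

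The paper then has $h_{\psi\psi}=-\bfx^\top\bfp=h_t$ for a.e.\ $\psi$ on every slice. Absolute continuity of $h_\psi$ in $\psi$, with a continuous integrand, upgrades this to every point. Note that the viscosity property enters only through comparison with cones in the streamline lemma, never through pointwise evaluation of the equation. That is why degenerate points cause no trouble there.
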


To prove Proposition \ref{prop: h is caloric} a fundamental result on the integral lines, or the \emph{streamlines}, of $ \nabla u $ is employed.

\begin{lemma}[Existence of maximal ascending streamlines with non-decreasing speed]\label{lem:fund}
	Let $ \Omega\subseteq\mR^n $, $ n \geq 2 $, be open and assume that $ u\in C^1(\Omega) $ is $ \infty $-subharmonic in $ \Omega $. i.e., $ \Delta_\infty u\geq 0 $. Let $ V\subset\subset\Omega $ be open. Then, for every $ x\in V $ with $ \nabla u(x)\neq 0 $ there is a $ T>0 $ and a curve $ \gamma\in C^1([0,T], \overline{V}) $ so that
	\begin{enumerate}[(i)]
		\item $ \gamma(0) = x $
		\item $ \gamma(T)\in\partial V $
		\item $ \dist(x,\partial V)\leq T \leq \frac{u(\gamma(T)) - u(x)}{|\nabla u(x)|} $
		\item $ \gamma'(s) = +\hu^\top(\gamma(s))$
		\item $ s\mapsto |\nabla u(\gamma(s))| $ is non-decreasing
		\item $ s\mapsto u(\gamma(s)) $ is convex
	\end{enumerate}
\end{lemma}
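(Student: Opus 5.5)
The plan is to construct $\gamma$ as a limit of streamlines of a regularized problem, or more directly via the comparison-with-cones characterization of $\infty$-subharmonicity. I will use the approach through the sub-cone property. Since $u\in C^1$ and $\infty$-subharmonic, $u$ enjoys comparison with cones from above. The standard consequence is the following. For $x\in V$ and $r<\dist(x,\partial V)$, the quantity $S_r(x):=\max_{|y-x|=r}\frac{u(y)-u(x)}{r}$ is non-decreasing in $r$, and $\lim_{r\to0}S_r(x)=|\nabla u(x)|$ because $u\in C^1$. Moreover, at a maximizer $y_r$ one has $S_{r}(y_r)\ge S_r(x)$ in a suitable limiting sense (the increasing slope estimate of Crandall--Evans--Gariepy).

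First, I build discrete ascending paths. Fix a small step $r>0$ and set $x_0=x$. Let $x_{k+1}$ be a point on $\partial B_r(x_k)$ maximizing $u$. Continue until the path leaves $V$, say at step $N$. The increasing slope estimate gives $S_r(x_{k+1})\ge S_r(x_k)\ge |\nabla u(x)|>0$. Hence $u(x_{k+1})-u(x_k)\ge r|\nabla u(x)|$, and since $u$ is bounded on $\overline V$, the path leaves $V$ after finitely many steps. Interpolating linearly gives $1$-Lipschitz curves $\gamma_r$ parametrized by arclength, with $u(\gamma_r(s))$ growing at least like $s|\nabla u(x)|$. I stop each curve at its first hit of $\partial V$, at time $T_r$. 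Then $T_r\ge \dist(x,\partial V)$, and $T_r|\nabla u(x)|\le u(\gamma_r(T_r))-u(x)+o(1)$. By Arzelà--Ascoli, a subsequence converges uniformly to a $1$-Lipschitz $\gamma$ on $[0,T]$ with $T=\lim T_r$. This gives (i), (ii) and (iii).

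Next, I identify the limit. Since $u\in C^1$ and $\nabla u\neq0$ along the path (the slopes are bounded below by $|\nabla u(x)|$), the maximizer on a small sphere satisfies $(x_{k+1}-x_k)/r=\hu(x_k)+o(1)$. This holds uniformly on compacts where $|\nabla u|\ge c>0$, by uniform continuity of $\nabla u$ on $\overline V$. Passing to the limit in the difference quotients gives $\gamma'(s)=\hu^\top(\gamma(s))$, which is (iv). The right side is continuous, so $\gamma\in C^1$. Then $(u\circ\gamma)'=|\nabla u(\gamma)|$. Property (v), monotonicity of $|\nabla u(\gamma(s))|$, follows from the monotone slopes $S_r(x_k)$: in the limit $S_r(x_k)\to|\nabla u(\gamma(s))|$ with the ordering preserved. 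Then (vi) is immediate, since the derivative of $u\circ\gamma$ is non-decreasing.

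The main obstacle is justifying the limit $S_r(\gamma_r(s))\to|\nabla u(\gamma(s))|$ uniformly along the discrete paths. This is what transfers monotonicity to the limit. I would first try to handle it with the uniform $C^1$ modulus on $\overline V$: $|S_r(y)-|\nabla u(y)||\le\omega(r)$ for all $y\in\overline V$ at distance at least $r$ from the boundary. Combined with $S_r(x_{k+1})\ge S_r(x_k)$, this yields $|\nabla u(\gamma(s'))|\ge|\nabla u(\gamma(s))|-2\omega(r)$ for $s'>s$, and letting $r\to0$ gives (v).
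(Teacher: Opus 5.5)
Your proposal is correct and follows essentially the paper's route: steepest-ascent points chosen as maximizers of $u$ on small spheres, the Crandall--Evans--Gariepy increasing slope estimate (the paper's Lemma~\ref{lem: gradient increase}), and an Arzel\`a--Ascoli limit. The differences are minor. The precise form of that estimate, $S_r(x_k)\le\lim_{\rho\to0}S_\rho(x_{k+1})=|\nabla u(x_{k+1})|$, together with monotonicity in $r$, already gives the exact sandwich $|\nabla u(x_k)|\le S_r(x_k)\le|\nabla u(x_{k+1})|$ that the paper uses for (v), so your $\omega(r)$ detour around the ``main obstacle'' is unnecessary; and the paper fixes the step direction exactly through the Lagrange condition $\widehat{\nabla u}^\top(x_{k+1})=(x_{k+1}-x_k)/r$ at the endpoint, rather than through your approximate $o(1)$ Taylor argument at $x_k$.
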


The symmetric result for \emph{descending} streamlines is obtained by setting $ v = -u $:

\begin{lemma}[Existence of maximal descending streamlines with non-decreasing speed]\label{lem:fundsup}
	Let $ \Omega\subseteq\mR^n $, $ n \geq 2 $, be open and assume that $ v\in C^1(\Omega) $ is $ \infty $-superharmonic in $ \Omega $. i.e., $ \Delta_\infty v\leq 0 $. Let $ V\subset\subset\Omega $ be open. Then, for every $ x\in V $ with $ \nabla v(x)\neq 0 $ there is a $ T>0 $ and a curve $ \gamma\in C^1([0,T], \overline{V}) $ so that
	\begin{enumerate}[(i)]
		\item $ \gamma(0) = x $
		\item $ \gamma(T)\in\partial V $
		\item $ \dist(x,\partial V)\leq T \leq \frac{v(x) - v(\gamma(T))}{|\nabla v(x)|} $
		\item $ \gamma'(s) = -\hv^\top(\gamma(s))$
		\item $ s\mapsto |\nabla v(\gamma(s))| $ is non-decreasing
		\item $ s\mapsto v(\gamma(s)) $ is concave
	\end{enumerate}
\end{lemma}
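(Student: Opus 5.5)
The plan is to obtain Lemma \ref{lem:fundsup} from Lemma \ref{lem:fund} applied to $ u := -v $, transporting every conclusion back through the sign change.

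First I check that the hypotheses transfer. Let $ v\in C^1(\Omega) $ be $ \infty $-superharmonic and set $ u := -v\in C^1(\Omega) $. Since $ \Delta_\infty(-\phi) = -\Delta_\infty\phi $ (the operator is cubic, hence odd), viscosity supersolutions of $ \Delta_\infty v\le 0 $ correspond exactly to viscosity subsolutions of $ \Delta_\infty u\ge 0 $. At the level of test functions: if $ \phi $ touches $ u = -v $ from above at $ x_0 $, then $ -\phi $ touches $ v $ from below there. The supersolution property gives $ \Delta_\infty(-\phi)(x_0)\le 0 $, that is, $ \Delta_\infty\phi(x_0)\ge 0 $. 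So $ u $ is $ \infty $-subharmonic. Moreover $ \nabla u(x) = -\nabla v(x) \neq 0 $ and $ |\nabla u| = |\nabla v| $ everywhere.

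Next I apply Lemma \ref{lem:fund} to $ u $, with the same $ V\subset\subset\Omega $ and $ x\in V $. It gives $ T>0 $ and $ \gamma\in C^1([0,T],\overline V) $ satisfying (i)--(vi) for $ u $, and I check that the same curve works for $ v $.
\begin{enumerate}[(i)]
\item Items (i) and (ii) carry over unchanged.
\item For (iii), note $ u(\gamma(T)) - u(x) = v(x) - v(\gamma(T)) $ and $ |\nabla u(x)| = |\nabla v(x)| $.
\item For (iv), $ \widehat{\nabla u} = -\widehat{\nabla v} $, so $ \gamma' = +\widehat{\nabla u}^\top(\gamma) = -\hv^\top(\gamma) $.
\item For (v), $ |\nabla v(\gamma(s))| = |\nabla u(\gamma(s))| $ is non-decreasing.
\item For (vi), $ v(\gamma(s)) = -u(\gamma(s)) $ is the negative of a convex function, hence concave.
\end{enumerate}

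There is no real obstacle here. The only step that needs any care is the viscosity sign flip, and it is settled by the touching-from-above/below correspondence described above.
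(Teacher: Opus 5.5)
Your proposal is correct and follows the paper's route exactly: the paper obtains Lemma~\ref{lem:fundsup} from Lemma~\ref{lem:fund} by setting $u=-v$. You have also spelled out the viscosity sign flip and the item-by-item transfer, which the paper leaves implicit.
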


By gluing an ascending streamline and a descending streamline that start at a point $ x_0\in\Omega $ the two lemmas in combination implies

\begin{corollary}\label{cor: streamline touch}
	Let $ u\in C^1(\Omega) $ be $ \infty $-harmonic in an open $ \Omega\subseteq\mR^n $, $ n \geq 2 $, and let $ x_0\in\Omega $ where $ \nabla u(x_0)\neq 0 $. Then there exists a streamline $ \gamma\in C^1((-T,T),\Omega) $, $ T>0 $, with $ |\gamma'(s)|\equiv 1 $ so that $ \gamma(0) = x_0 $ and
	\begin{equation}
		|\nabla u(\gamma(s))| \geq |\nabla u(x_0)|,\qquad -T<s<T.
	\end{equation}
\end{corollary}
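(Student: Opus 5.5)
The plan is to glue an ascending and a descending streamline at $x_0$, as suggested just before the statement. Since $\nabla u(x_0)\neq 0$ and $\nabla u$ is continuous (infinity-harmonic functions are $C^1$, and $u\in C^1(\Omega)$ is assumed here), fix a small open ball $V:=B(x_0,r)$ with $\overline{V}\subset\Omega$. Apply Lemma \ref{lem:fund} to $u$, which is in particular $\infty$-subharmonic, at the point $x=x_0\in V$. This gives $T_+>0$ and a curve $\gamma_+\in C^1([0,T_+],\overline V)$ with $\gamma_+(0)=x_0$, $\gamma_+'=\hu^\top(\gamma_+)$, and $s\mapsto|\nabla u(\gamma_+(s))|$ non-decreasing. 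Likewise, apply Lemma \ref{lem:fundsup} to $v=u$, which is also $\infty$-superharmonic, at $x_0$. This gives $T_->0$ and $\gamma_-\in C^1([0,T_-],\overline V)$ with $\gamma_-(0)=x_0$, $\gamma_-'=-\hu^\top(\gamma_-)$, and $|\nabla u(\gamma_-(s))|$ non-decreasing. By item (iii) of each lemma, both $T_\pm\ge \dist(x_0,\partial V)=r>0$.

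Set $T:=\min(T_+,T_-)>0$ and define
\[
\gamma(s):=\begin{cases}\gamma_+(s), & 0\le s<T,\\ \gamma_-(-s), & -T<s<0.\end{cases}
\]
Then $\gamma(0)=x_0$. For $s>0$ we have $\gamma'(s)=\hu^\top(\gamma(s))$, and for $s<0$ the chain rule gives $\gamma'(s)=-\gamma_-'(-s)=\hu^\top(\gamma(s))$. So on both sides $\gamma$ solves $\gamma'=\hu^\top(\gamma)$, and $|\gamma'|\equiv1$ wherever $\nabla u(\gamma)\neq0$.

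The one point needing care is $C^1$-regularity at $s=0$. Both one-sided derivatives equal $\hu^\top(x_0)$, which is well defined because $\nabla u(x_0)\ne0$. Moreover $\hu$ is continuous near $x_0$, so $\gamma'$ is continuous at $0$. Hence $\gamma\in C^1$ with $|\gamma'(0)|=1$.

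For the gradient bound, monotonicity on each branch gives $|\nabla u(\gamma_\pm(s))|\ge|\nabla u(\gamma_\pm(0))|=|\nabla u(x_0)|>0$ for $0\le s<T$. This yields $|\nabla u(\gamma(s))|\ge|\nabla u(x_0)|$ for all $-T<s<T$. In particular $\nabla u$ never vanishes along $\gamma$, so $\hu$ is defined there and $|\gamma'|\equiv1$ on the whole interval. Finally, $\gamma$ takes values in $\overline V\subset\Omega$, and on the open interval $(-T,T)$ it stays in $\Omega$.
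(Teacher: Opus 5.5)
Your proposal is correct and takes the paper's approach: glue the ascending streamline from Lemma \ref{lem:fund} to the time-reversed descending streamline from Lemma \ref{lem:fundsup} at $x_0$, which the paper states in a single sentence. Your added details are sound: the choice $T=\min(T_+,T_-)\ge r>0$, the sign check $\gamma'(s)=-\gamma_-'(-s)=\hu^\top(\gamma(s))$ for $s<0$, and the $C^1$ matching at $s=0$ using the non-vanishing gradient.
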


Lemma \ref{lem:fund} is essentially the $ C^1 $ version of Proposition 6.2 in \cite{Crandall2008}. The details of the proof are covered in the appendix.

\begin{proof}[Proof of Proposition \ref{prop: h is caloric}]
	For each fixed $ t_0 $ the curve $ \psi\mapsto \bfx^{t_0}(\psi) := \bfx(\psi,t_0) $ is Lipschitz, and thus absolutely continuous and differentiable at almost every $ \psi $.
	
	Suppose that $ \bfx^{t_0} $ is differentiable at $ \psi_0 $, and write $ x_0 = \bfx(\psi_0, t_0)\in\Omega $. For $ \psi $ close to $ \psi_0 $ the straight curve $ \bfd(\psi) := (\psi, t_0) $ in $ D $ locally divides $ D $ into the lower half $ D_l := \{(\psi, t)\,:\,t\leq t_0\} $ and the upper half $ D_u := \{(\psi, t)\,:\,t\geq t_0\} $. Since $ \bfx = \bff\circ\bfp\colon D\to\Omega $ is a homomorphism the topology is preserved, and the curve $ \psi\mapsto (\bfx\circ\bfd)(\psi) = \bfx(\psi,t_0) $ thus locally divides $ \Omega $ into $ \Omega_l := \bfx(D_l) = \{x\,:\, |\nabla u(x)|\geq |\nabla u(x_0)|\} $ and $ \Omega_u := \bfx(D_u) = \{x\,:\, |\nabla u(x)|\leq |\nabla u(x_0)|\} $.
	
	By Corollary \ref{cor: streamline touch} a regular $ C^1 $ curve $ \gamma $ passes through $ x_0 $ that lays entirely in $ \Omega_l $. This means that, while the two curves $ \bfx^{t_0} $ and $ \gamma $ touch at $ x_0 $, \emph{they do not cross}. So, if $ \frac{\dd}{\dd\psi}\bfx^{t_0}(\psi_0)\neq 0 $ the tangent lines of the curves must be parallel at the touching point. i.e., $ \bfx_\psi(\psi_0, t_0) \parallel \gamma'(0) $. In any case, zero or not, and since $ \gamma'(0) = \hu^\top(x_0) = \hu^\top(\bfx(\psi_0, t_0)) = \hp(\psi_0, t_0) $, it follows that for each fixed $ t $, $ \psi\mapsto\bfx $ is differentiable with
	\begin{equation}
		0 = \bfx_\psi^\top Q\bfp = \bfx_\psi^\top \bfp_\perp, \qquad\text{at almost every $ \psi $.}
	\end{equation}
	Next, for $ \psi\mapsto h_\psi = \bfx^\top\bfp_\perp $ (see \eqref{eq: h_psi, h_t}) we then have
	\[ h_{\psi\psi} = \bfx^\top_\psi\bfp_\perp + \bfx^\top\frac{\partial}{\partial\psi}\bfp_\perp = 0 - \bfx^\top\bfp, \qquad\text{a.e.,}\]
	and, since $ \psi\mapsto h_\psi(\psi,t) $ is absolutely continuous,
	\[ h_\psi(\psi,t) - C(t) = \int_{\psi'}^\psi h_{\psi\psi}(\phi,t)\dd\phi = - \int_{\psi'}^\psi \bfx^\top(\phi,t)\bfp(\phi,t)\dd\phi\]
	in the sense of Lebesgue. But the integrand is continuous, so $ \psi\mapsto h_\psi $ is differentiable  \emph{everywhere} with $ h_{\psi\psi} = - \bfx^\top\bfp $. By \eqref{eq: h_psi, h_t} this equals $ h_t $.
	
\end{proof}

The $ \infty $-harmonic function $ u $ as a function of $ \psi, t $ is
\begin{equation}\label{eq: v definition}
	v(\psi,t) := u(\bff(\bfp(\psi,t))) = u(\bfx(\psi, t)).
\end{equation}
Rearranging $ w(p) = p^\top\bff(p) - u(\bff(p)) $ and using \eqref{eq: x definition}, \eqref{eq: h_psi, h_t}, gives
\begin{align*}
	v &= u(\bfx)\\
	  &= \bfp^\top\bfx - w(\bfp)\\
	  &= - h_t - h,
\end{align*}
which shows that $ v $ is smooth and caloric, as well.

The final key result towards the proof of the Theorem is to show that $ \nabla v := [v_\psi, v_t] \neq 0 $ in $ D $.

\begin{proposition}\label{prop: key 2}
	Under the assumptions of the Theorem, let $ v\in C^\infty(D) $ be defined as in \eqref{eq: v definition}. If $ v_\psi = 0 $ at $ (\psi_0, t_0) \in D $ then
	\[ v_t(\psi_0, t_0) \neq 0. \]
\end{proposition}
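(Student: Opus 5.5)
The plan is to argue by contradiction. Suppose that at $(\psi_0,t_0)$ both $v_\psi=0$ and $v_t=0$. First I would translate this into information about $\bfx$. By the chain rule, $v=u(\bfx)$ and $\nabla u(\bfx)=\bfp^\top$ give
\[ v_\psi=\bfp^\top\bfx_\psi \qquad\text{and}\qquad v_t=\bfp^\top\bfx_t . \]
These are now classical derivatives, since Proposition \ref{prop: h is caloric} and \eqref{eq: x in terms of h} make $\bfx$ smooth. The proof of Proposition \ref{prop: h is caloric} gives $\bfx_\psi^\top\bfp_\perp=0$ almost everywhere, and by continuity everywhere, so $\bfx_\psi\parallel\bfp$. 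Hence $v_\psi=0$ forces $\bfx_\psi(\psi_0,t_0)=0$. Adding $v_t=0$ forces $\bfx_t(\psi_0,t_0)\parallel\bfp_\perp$. So at the bad point the smooth homeomorphism $\bfx$ degenerates: its Jacobian has rank at most one and its image lies in the direction tangent to the level line of $u$. In terms of $h$, using $v=-h_t-h$, the two conditions read $h_{t\psi}=-h_\psi$ and $h_{tt}=-h_t$, and these can be checked against \eqref{eq: x in terms of h}.

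Next I would use the geometry of streamlines. Corollary \ref{cor: streamline touch} gives a unit-speed $C^1$ curve $\gamma$ through $x_0=\bfx(\psi_0,t_0)$ with $\gamma'(0)=\hp$, lying in $\Omega_l$. Along $\gamma$ the function $u$ increases strictly, with $u(\gamma(s))-u(x_0)=s|\bfp|+o(s)$. Pull $\gamma$ back by the homeomorphism $\bfx^{-1}=\bfp^{-1}\circ\nabla u$ to obtain a continuous curve $\bfd(s)$ in the lower half $D_l=\{t\le t_0\}$ through $(\psi_0,t_0)$, on which $v(\bfd(s))-v(\psi_0,t_0)$ changes sign exactly at $s=0$, from negative to positive. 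On the other side, $v-v_0$ is caloric with vanishing gradient at $(\psi_0,t_0)$. I would expand it in parabolic Taylor form: its lowest-order nonzero part is a caloric polynomial $P$ of parabolic degree $k\ge2$ (for example $a(\psi^2+2t)$ when $k=2$), and $P$ cannot vanish identically because $v$ is not constant. The aim is to show that the sign pattern of $P$ in the past half-neighbourhood $\{t\le t_0\}$ is incompatible with the existence of such a sign-changing curve, together with the rank-one degeneracy of $\bfx$ found above.

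The main obstacle is quantitative. The curve $\bfd$ is only continuous, and $|\bfd(s)-\bfd(0)|$ is comparable to $|\nabla u(\gamma(s))-\nabla u(x_0)|$. Condition \eqref{eq: reverse lipschitz} bounds this from below by $C_1s$ but gives no upper bound, so I cannot directly compare the linear growth $s|\bfp|$ with the at-least-quadratic parabolic growth of $v-v_0$. My first attempt would be to use the lower bound together with $\bfd\subset D_l$: the point $\bfd(s)$ must leave every parabolic box $\{|\psi-\psi_0|\le r,\ t_0-r^2\le t\le t_0\}$ at a controlled rate. I would then apply the parabolic maximum principle to $v-v_0$ on such lower boxes, where the bottom and lateral boundaries are controlled by $P$. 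If that fails, the fallback is to exploit $\bfx_\psi=0$ directly: differentiating \eqref{eq: x in terms of h} and combining with $h_{t\psi}=-h_\psi$, $h_{tt}=-h_t$ and the heat equation should give the vanishing of a further derivative of $\bfx$. That would contradict the Lipschitz bound $|\bff(p)-\bff(q)|\le C_1^{-1}|p-q|$ in the reverse direction, namely that $\nabla u$ cannot collapse a whole neighbourhood onto a set of lower dimension.
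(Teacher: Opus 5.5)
There is a genuine gap: neither of your two routes can produce the contradiction. First, under your hypothesis $v_t=0$ the case $k=2$ is already excluded. Since $v_t=v_{\psi\psi}$, the leading caloric part is $a_Nr_N$ with $N\ge3$. Yet already for $r_3=\psi(\psi^2+6t)$, the past half $\{t\le t_0\}$ contains both a positive and a negative nodal domain touching the point. It also contains curves through the point with exactly one sign change: in coordinates centred at $(\psi_0,t_0)$, the curves $(\psi,t)=(\pm s,-s^2)$ give $r_3=\mp5s^3$. So a single pulled-back ascending streamline in $D_l$ is perfectly compatible with the degenerate picture. The quantitative refinement does not rescue this. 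The bound $|v-v_0|\lesssim\rho^3$ in parabolic distance $\rho$ only forces $\bfd(s)$ to stay at parabolic distance $\gtrsim s^{1/3}$. That is a \emph{lower} bound on $|\nabla u(\gamma(s))-\nabla u(x_0)|$, and lower bounds cannot contradict \eqref{eq: reverse lipschitz}. You would need an upper Hölder bound on $\nabla u$, which is exactly what the Theorem is after. The fallback fails for the same reason. In fact $\bfx_t=e^t(v_t\hp-v_\psi\hp_\perp)$, so the entire Jacobian of $\bfx$ vanishes at the bad point, not just one column. But flatness of an injective map at one point contradicts neither injectivity (think of $s\mapsto s^3$) nor $|\bff(p)-\bff(q)|\le C_1^{-1}|p-q|$. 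The latter is an upper bound on increments of $\bff$ and is only helped by flatness; no neighbourhood collapses.

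The missing idea is topological: count local nodal domains instead of following one curve. Since $\bfx$ is a homeomorphism and $v=u\circ\bfx$, the local sign structure of $v-v_0$ at $(\psi_0,t_0)$ is that of $u-u(x_0)$ at $x_0$. Because $\nabla u(x_0)\neq0$, the zero set of $u-u(x_0)$ is locally a $C^1$ arc, so there are exactly two local nodal domains. On the other hand, for $N\ge3$ the factorization \eqref{eq: r_n product} shows that $r_N$ has $2\lceil N/2\rceil\ge4$ nodal domains. Its zeros on the parabolic ellipse $(\epsilon\cos\theta,\epsilon^2\sin\theta)$ are transversal. The rescaling $\frac{1}{a_N\epsilon^N}v(\epsilon\psi,\epsilon^2t)=r_N+o(1)$ therefore transfers at least four alternating sign changes to $v$, and through $\bfx$ to $u$ near $x_0$, which is impossible. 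This is the paper's argument, and it needs neither streamlines nor rates. Separately, ``$P$ cannot vanish identically because $v$ is not constant'' is not a valid justification, since caloric functions need not be analytic in $t$. You must show that $\partial_\psi^n v(\psi_0,t_0)\neq0$ for some $n\ge1$; by the heat equation these determine all Taylor coefficients. Your own first step gives this. Otherwise spatial analyticity yields $v_\psi(\cdot,t_0)\equiv0$ near $\psi_0$, hence $\bfx_\psi(\cdot,t_0)\equiv0$, so $\bfx(\cdot,t_0)$ is constant, contradicting injectivity. The paper reaches the same conclusion via $h+h_{\psi\psi}=-v$ on the time slice.
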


\begin{proof}
	First, there is no harm assuming $ u = 0 $ at $ x_0 := \bff(\bfp(\psi_0, t_0)) $, so that $ v(\psi_0, t_0) = 0 $ as well. In fact, we may also assume $ (\psi_0, t_0) = (0,0) $ and $ x_0 = 0\in\Omega $, since all this is obtained by just translating, scaling and rotating $ \Omega $ so that $ \nabla u(x_0) = \nabla u(0) = [1,0] $.
	
	From \eqref{eq: w definition}, \eqref{eq: h definition}, \eqref{eq: h_psi, h_t}, and by the assumption $ 0 = v_\psi = - h_{t\psi} - h_\psi $, we then have
	\begin{equation}\label{eq: h 4 moments are 0}
		h = h_\psi = h_t = h_{t\psi} = 0\qquad\text{at $ (\psi, t) = (0, 0) $.}
	\end{equation}
	
	The function $ v(\psi,0) $ is not identically zero, because if it was then $ 0 = -v = h + h_t = h + h_{\psi\psi} $, which solves to $ h(\psi,0) = A\cos\psi + B\sin\psi $. The boundary conditions \eqref{eq: h 4 moments are 0} then makes $ h(\psi,0) $ identically zero too and, by \eqref{eq: x in terms of h}, $ \bfx(\psi, 0)\equiv 0 $. That is not a homomorphism.
	
	For a fixed time, it is known that caloric functions in the plane are analytic in the spatial variable. We may therefore write
	\begin{equation}\label{eq: v Taylor series}
		v(\psi, 0) = \sum_{n=2}^{\infty}a_n \psi^n,\qquad\text{near $ \psi = 0 $,}
	\end{equation}
	where, by the argument above, at least some of the $ a_n $'s are non-zero. However, $ a_0 = v(0,0) = 0 $ and $ a_1 = v_\psi(0,0) = 0 $.
	
	Next I do a (two-dimensional) Taylor-approximation of $ v $ around $ (\psi, t) = (0,0) $, and gather the terms with the same coefficient $ a_n = \frac{\partial^n}{\partial\psi^n}v(0,0) /n! $ into polynomials $ r_n $. That is,
	\[ v(\psi, t) = \sum_{n=2}^N a_n r_n(\psi, t) + E_N(\psi, t). \]
	It can be shown that $ r_n $ is the $ n $-th order \emph{heat polynomial}
	\[ r_n(\psi, t) = \sum_{k=0}^{\lfloor n/2\rfloor} \frac{n!}{k!(n-2k)!}\psi^{n-2k}t^k,\] 
	which is caloric and \emph{parabolically $ n $-homogeneous},
	\begin{equation}
		r_n(\lambda\psi,\lambda^2t) = \lambda^n r_n(\psi, t),\qquad \lambda\in\mR.
	\end{equation}
	The first few are
	\[ r_0 = 1,\quad r_1 = \psi,\quad r_2 = \psi^2 + 2t,\quad r_3 = \psi^3 + 6\psi t,\quad r_4 = \psi^4 + 12\psi^2 t + 12t^2. \]
	The error term thus have the property $ E_N(\epsilon\psi, \epsilon^2t) = o(|\epsilon|^N)$ in bounded domains.
	
	An important fact about the heat polynomials is that they can be factored as
	\begin{equation}\label{eq: r_n product}
		r_n(\psi, t) = \frac{n!}{\lfloor n/2\rfloor!}\psi^{\sigma_n}\prod_{k=1}^{\lfloor n/2\rfloor} \left(t + b_{n,k}\psi^2\right),
	\end{equation}
	for some positive distinct numbers $ 0 < b_{n,1} < b_{n,2} < \cdots < b_{n,{\lfloor n/2\rfloor}} $.
	See e.g. Theorem 2.4 in \cite{BadgerJeznach2026}. Here, $ \sigma_n = \frac{1-(-1)^n}{2} $ is zero for $ n $ even, and 1 for $ n $ odd.
	
	Now, let $ a_N $, $ N\geq 2 $, be the first non-zero coefficient in the Taylor series \eqref{eq: v Taylor series}. Assume initially that $ N = 3 $. Then $ v(\psi, t) = a_3r_3(\psi, t) + E_3(\psi, t) $, which scales to
	\begin{equation}\label{eq: v approx to r_3}
		\frac{1}{a_3\epsilon^3}v(\epsilon\psi, \epsilon^2t) = \psi(\psi^2 + 6t) + o(1)\qquad\text{as $ \epsilon\to 0 $}.
	\end{equation}
	Notice that the null-levelset of $ r_3 $ is the $ t $-axis in union with a downward-opening parabola. The number of \emph{nodal domains}\footnote{the number of connected components of $ \{(\psi,t)\;|\; r_3(\psi, t) \neq 0\} $.} is thus four, and $ r_3 $ changes sign at least four times along any simple closed curve around the origin -- and \emph{exactly} four times for such curves that enters each nodal domain only once. By \eqref{eq: v approx to r_3}, the same is true for $ v $ on curves sufficiently close to $ (0,0)\in D $. For example, for $ \theta\mapsto v(\bfd(\theta)) $ on the ellipse
	\[ \bfd(\theta) = \begin{bmatrix}
	\psi(\theta)\\
	t(\theta)
	\end{bmatrix} = \begin{bmatrix}
	\epsilon\cos\theta\\
	\epsilon^2\sin\theta
	\end{bmatrix} \]
	provided, say, $ |o(1)|\leq 1 $.
	
	Since $ \bfx\colon D\to\Omega $ is a homomorphism, the curve $ \omega(\theta) := \bfx(\bfd(\theta)) $ is also simple and closed around $ x_0 = 0 $ in $ \Omega $, and
	\[ u(\omega(\theta)) = u(\bfx(\bfd(\theta))) = v(\bfd(\theta)) \]
	changes sign four times, as well. But this is a contradiction since $ \nabla u \neq 0 $, and the null-levelset of $ u $ passes trough $ x_0 $ and is locally a straight line.
	
	The situation gets no better if $ a_N $, $ N\geq 4 $, is the first non-zero coefficient in the Taylor series \eqref{eq: v Taylor series}: By \eqref{eq: r_n product} the number of nodal domains of $ r_N $ is $ 2\lceil N/2\rceil \geq 4 $, and
	\[ \frac{1}{a_N\epsilon^N}v(\epsilon\psi, \epsilon^2t) = r_N(\psi, t) + o(1)\qquad\text{as $ \epsilon\to 0 $} \]
	in the same way as in \eqref{eq: v approx to r_3}.
	
	It follows that none of $ a_3r_3, a_4r_4, a_5r_5, \dots $ can be the first non-zero term, and we have proved that
	\[ 0 \neq 2! a_2 = v_{\psi\psi} = v_t \]
	at $ (\psi_0, t_0) $.
\end{proof}

With this last result I am going to show that
\begin{equation}\label{eq: f growth lower bound}
	|\bff(p + h) - \bff(p)| \geq C|h|^3
\end{equation}
locally in $ B = \nabla u(\Omega) $, (this is where the "3" in \eqref{eq: hølder} comes from). We have $ \bff\in C^\infty(B,\Omega) $, and differentiating $ \bff(\bfp) = \bfx = e^{t}\left(h_\psi\hp_\perp - h_t\hp\right) $ with $ \psi $ and $ t $ gives
\begin{equation}\label{eq: x del psi}
	\begin{aligned}
	\nabla\bff(\bfp)\bfp_\perp = \bfx_\psi
	&= e^{t}\left(h_{\psi\psi}\hp_\perp - h_\psi\hp - h_{t\psi}\hp - h_{t}\hp_\perp\right)\\
	&= e^tv_\psi\hp
	\end{aligned}
\end{equation}
and
\begin{equation}\label{eq: x del t}
	\begin{aligned}
	-\nabla\bff(\bfp)\bfp = \bfx_t
	&= \bfx + e^{t}\left(h_{t\psi}\hp_\perp - h_{tt}\hp\right)\\
	&= e^t\left(v_t\hp - v_\psi\hp_\perp\right),
	\end{aligned}
\end{equation}
respectively.
From this it is seen that $ \nabla\bff = \Hw $ is never the zero-matrix (Proposition \ref{prop: key 2}), and that $ p_\perp^\top\nabla\bff(p)p_\perp = 0 $.\footnote{or $ p_1^2w_{p_2p_2} - 2p_1 p_2w_{p_1p_2} + p_2^2w_{p_1p_1} = 0 $ when written out in terms of the second-order partial derivatives of $ w $, with variables $ p = (p_1, p_2) = (u_x(x,y), u_y(x,y)) $.}

Since $ \hp = e^t\bfp $, multiplying with $ e^t $ and taking cross product yield 
\[ \det\Hw(\bfp) = \det\nabla\bff(\bfp) = - e^t\bfx_\psi\times e^t\bfx_t = -e^{2t}\bfx_\psi^\top Q\bfx_t = -e^{4t}v_\psi^2.\]
By the Inverse Function Theorem, the inverse $ \nabla u $ of $ \bff $ is thus smooth near points $ x = \bfx(\psi,t) $ where $ v_\psi(\psi,t)\neq 0 $.


We need to do a third order expansion of $ \bff $. This can be tedious, but by using matrix derivatives with the following notation the calculations can be kept relatively tidy and efficient.

\begin{definition}
	Let $ B\subseteq\mR^n $ be open and let $ A\colon B\to\mR^{N\times M} $ be smooth. Then for $ e\in\mR^M $,
	\[ \nabla_e A(p) := \nabla[A(p)e] \in \mR^{N\times n}.\]
	That is, $ \nabla_e A $ is the Jacobian matrix of the vector field $ p\mapsto A(p)e\in \mR^N $.
\end{definition}

When $ e\colon\Omega\to\mR^M $ itself is a smooth vector field, with Jacobian matrix $ \nabla e(p)\in\mR^{M\times n} $, the product rule is $ \nabla [Ae] = \nabla_e A + A\nabla e $. It can be shown that when $ A $ is a Hessian $ (N = M = n) $ then $ \nabla_e A \in \mR^{n\times n}$ is symmetric and $ \nabla_b Aa = \nabla_a Ab $ for all $ a,b\in\mR^n $. In this case we also write $ \nabla_{ab}A $ for the second-order derivative $ \nabla_b\nabla_a A = \nabla[\nabla_aAb] = \nabla[\nabla_bAa] = \nabla_{ba}A $. The second-order Taylor polynomial of $ A $ about $ p\in B $ can thus be written as $ A(p+e) \approx A(p) + \nabla_{e}A(p) + \nabla_{ee}A(p) $.

\begin{lemma}\label{lem: Hw higer derivatives}
	At all $ p\in B = \nabla u(\Omega) $ we have the following identities in $ \mR^2 $ and $ \mR^{2\times 2} $, respectively.
	\begin{enumerate}[I)]
		\item \[ \left(2Q^\top\Hw(p) + \nabla_{p_\perp}\Hw(p)\right)p_\perp  = 0\]
		\item \[ \frac{1}{2}\nabla_{p_\perp p_\perp}\Hw(p) = -Q^\top\nabla_{p_\perp}\Hw(p) - \nabla_{p_\perp}\Hw(p)Q - Q^\top\Hw(p)Q. \]
	\end{enumerate}
	Furthermore. If $ p = \bfp(\psi_0, t_0) $ where $ v_\psi(\psi_0, t_0) = 0 $, then
	\begin{enumerate}[A)]
		\item $ \Hw(p)p_\perp = 0 $
		\item $ \Hw(p)p = -e^{2t_0}v_t p  $
		\item $ \nabla_{p_\perp}\Hw(p)p_\perp = 0 $
		\item $ p_\perp^\top\nabla_{p_\perp p_\perp}\Hw(p)p_\perp = 2v_t $
	\end{enumerate}
\end{lemma}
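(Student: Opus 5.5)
The plan is to derive everything from the single identity $p_\perp^\top\Hw(p)p_\perp = 0$, valid for all $p\in B$, which we obtained from \eqref{eq: x del psi}. We will differentiate it in the direction $p_\perp$, viewing $\Hw$ as a smooth symmetric matrix field with $\nabla_e$ notation. The pointwise statements A)--D) then follow by evaluating at a point where $v_\psi=0$, using \eqref{eq: x del psi}, \eqref{eq: x del t} and Proposition \ref{prop: key 2}.

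\textbf{Identity I).} From \eqref{eq: x del psi} we know that $\Hw(p)p_\perp = e^t v_\psi\hp$ is parallel to $p$ for every $p$. Equivalently, the vector field $F(p):=\Hw(p)Qp$ satisfies $p_\perp^\top F(p)=0$ identically. Differentiating $p\mapsto p^\top Q^\top F(p)$ and applying the result to the direction $p_\perp$ gives
\[ p_\perp^\top\nabla_{Qp}\Hw\,p_\perp + p_\perp^\top\Hw Q p_\perp + (Qp_\perp)^\top\Hw p_\perp = 0 . \]
This scalar relation alone is too weak, since I) is a vector identity. The cleaner route is to use the full vector identity $Q^\top \Hw(p) Qp = \lambda(p)\,p$, i.e. $\Hw p_\perp\parallel p$. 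It is better to exploit the structure of the equation directly. Writing $\Hw p_\perp = \mu(p)p$ with $\mu = e^{2t}v_\psi$ a scalar, we differentiate in direction $p_\perp$:
\[ \nabla_{p_\perp}\Hw\,p_\perp + \Hw Qp_\perp = (\partial_{p_\perp}\mu)p + \mu p_\perp . \]
Here we use $\nabla[\Hw Qp]p_\perp = \nabla_{Qp}\Hw\,p_\perp + \Hw Q p_\perp$, together with the symmetry $\nabla_{p_\perp}\Hw\,p_\perp=\nabla_{p_\perp}\Hw\,p_\perp$. Next we express $\partial_{p_\perp}\mu$ and $\mu$ through $\Hw$: we have $\mu p = \Hw p_\perp$, i.e. $\mu p_\perp = Q\Hw p_\perp$. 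Since $\partial_\psi$ corresponds to $\partial_{p_\perp}$, the caloric relation (through $v_{\psi\psi}=v_t$) and \eqref{eq: x del t} give $\Hw p$ in terms of $v_t,v_\psi$, which should let the $p$-component combine into $-\Hw Q p_\perp$ type terms. Collecting terms, we aim to reach $2Q^\top\Hw p_\perp+\nabla_{p_\perp}\Hw p_\perp=0$. We will check the components along $p$ and $p_\perp$ separately using \eqref{eq: x del psi}, \eqref{eq: x del t} and $v_{\psi\psi}=v_t$.

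\textbf{Identity II).} We differentiate I) once more in direction $p_\perp$, treating it as the identity $G(p)p_\perp=0$ with $G = 2Q^\top\Hw+\nabla_{p_\perp}\Hw$, where the subscript $p_\perp$ also varies. This yields a vector identity in which $\nabla_{p_\perp p_\perp}\Hw\,p_\perp$ appears. To upgrade it to the matrix identity II), we polarize. Because I) holds for every $p$, we replace $p$ by $p+se$ for arbitrary $e$ and differentiate in $s$; this gives the action on a full basis. Symmetry of $\nabla_{ab}\Hw$ then lets us solve for $\tfrac12\nabla_{p_\perp p_\perp}\Hw$. I expect this step to be the main obstacle, because bookkeeping the $p$-dependence of the direction $p_\perp$ through $Q$ produces the extra terms $Q^\top\nabla_{p_\perp}\Hw$, $\nabla_{p_\perp}\Hw Q$ and $Q^\top\Hw Q$. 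If polarization becomes messy, the fallback is to verify II) componentwise in the frame $\{\hp,\hp_\perp\}$, using the expressions for $\bfx_\psi,\bfx_t$ and their $\psi$-derivatives together with $v_{\psi\psi}=v_t$.

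\textbf{Pointwise statements A)--D).} Now suppose $v_\psi=0$ at the given point. Then A) is immediate from \eqref{eq: x del psi}. For B), \eqref{eq: x del t} gives $\Hw p = -e^{t}v_t\hp = -e^{2t}v_t p$. For C), we insert A) into I): $\nabla_{p_\perp}\Hw\,p_\perp = -2Q^\top\Hw p_\perp = 0$. For D), we sandwich II) between $p_\perp^\top$ and $p_\perp$. The term $p_\perp^\top\nabla_{p_\perp}\Hw Qp_\perp = -p_\perp^\top\nabla_{p_\perp}\Hw\,p$ vanishes by C) and symmetry, and likewise $p_\perp^\top Q^\top\nabla_{p_\perp}\Hw p_\perp=0$ by C). What remains is $-p_\perp^\top Q^\top\Hw Q p_\perp = -p^\top\Hw p = e^{2t}v_t|p|^2 = v_t$, using B) and $|p|=e^{-t}$. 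Hence $p_\perp^\top\nabla_{p_\perp p_\perp}\Hw\,p_\perp = 2v_t$.
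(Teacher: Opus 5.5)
Your proposal is correct. For II) and A)--D) it follows the paper: the paper also obtains II) by differentiating I) in an arbitrary direction $e$, and your computations of B)--D) are right. The paper first rewrites $e^\top\nabla_{p_\perp}\Hw\,p_\perp = p_\perp^\top\nabla_e\Hw\,p_\perp$ so that the varying vector leaves the subscript, which is the bookkeeping you anticipate.

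The genuine difference is I). The paper gets it in one line as the full gradient of $p\mapsto (Qp)^\top\Hw(p)Qp\equiv 0$. That gradient is the row vector $p_\perp^\top(\nabla_{p_\perp}\Hw + 2\Hw Q)$, and its transpose is I).

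Your remark that the scalar identity is ``too weak'' is only true because you differentiated it along $p_\perp$ alone. That gives just the $p_\perp$-component of I); the radial derivative supplies the $p$-component.

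Your replacement route does close. With $\Hw p_\perp = \mu p$ and $\mu = e^{2t}v_\psi$:
\begin{itemize}
\item $\partial_{p_\perp}\mu = e^{2t}v_{\psi\psi} = e^{2t}v_t$;
\item by \eqref{eq: x del t}, $-\Hw Qp_\perp = \Hw p = -e^{2t}v_t p + \mu p_\perp$.
\end{itemize}
Substituting these into your differentiated identity, the $p$-components cancel, leaving $\nabla_{p_\perp}\Hw\,p_\perp = 2\mu p_\perp = -2Q^\top\Hw p_\perp$.

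The trade-off is this. The paper's route shows that I) and II) follow from the dual equation $p_\perp^\top\Hw p_\perp = 0$ alone, holding on an open set. Yours differentiates only along the circles $|p| = \mathrm{const}$ and makes up for the missing radial derivative with the heat equation for $v$ and the formula \eqref{eq: x del t} for $\bfx_t$. In return, it makes visible that, given the dual equation, the $p$-component of I) is exactly $v_{\psi\psi} = v_t$.
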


\begin{proof}
	Starting with the PDE $ 0 = p_\perp^\top\Hw(p)p_\perp = p^\top Q^\top\Hw(p)Qp $, the gradient of the right-hand side is
	\[ 0 = p_\perp^\top\left(\nabla_{p_\perp}\Hw + \Hw Q\right) + p_\perp^\top\Hw Q. \]
	Collecting terms and transposing gives I).
	
	Before differentiating I) we multiply from the left with $ e\in\mR^2 $ to get the variable $ p_\perp $ out of $ \nabla_{p_\perp}\Hw $.
	\[ 0 = e^\top\left(2Q^\top\Hw + \nabla_{p_\perp}\Hw\right)p_\perp = \left(2e_\perp^\top\Hw + p_\perp^\top\nabla_{e}\Hw\right)p_\perp. \]
	The gradient is then
	\begin{align*}
		0 &= \left(2e_\perp^\top\Hw + p_\perp^\top\nabla_{e}\Hw\right)Q\\
		  &\quad{} + p_\perp^\top\left(2\nabla_{e_\perp}\Hw + \nabla_{p_\perp e}\Hw + \nabla_{e}\Hw Q\right)\\
		  &= e^\top\left(2Q^\top\Hw + \nabla_{p_\perp}\Hw\right)Q\\
		  &\quad{} + e^\top\left(2Q^\top\nabla_{p_\perp}\Hw + \nabla_{p_\perp p_\perp}\Hw + \nabla_{p_\perp}\Hw Q\right)\\
		  &= e^\top\left(2Q^\top\Hw Q + 2\nabla_{p_\perp}\Hw Q + 2Q^\top\nabla_{p_\perp}\Hw + \nabla_{p_\perp p_\perp}\Hw\right)
	\end{align*}
	which is II) since $ e $ is arbitrary.
	
	The claims A)-D) then follows readily from \eqref{eq: x del psi}, \eqref{eq: x del t}, and I), II).
\end{proof}

For the proof of the Theorem to work it is important that the Jacobian matrix $ \Hw $ of $ \bff $ is both singular and non-zero when $ v_\psi = 0 $. This is illustrated by the next lemma, which establish that the minimal change in $ \bff $ must be in the general heading of the singular direction $ \hat{p}_\perp $ of $ \Hw $.

\begin{lemma}
	Let $ p = \bfp(\psi_0, t_0) $ be a point where $ v_\psi(\psi_0, t_0) = 0 $, and let $ \epsilon_0>0 $ be small.
	Then there is a constant $ C>0 $ such that for all $ 0<\epsilon\leq \epsilon_0 $
	\[ \min_{|h|=\epsilon}|\bff(p+h) - \bff(p)| = \min_{\substack{|h|=\epsilon\\
	\left|\hat{h}^\top\hat{p}\right|\leq C\epsilon}}|\bff(p+h) - \bff(p)|. \]
\end{lemma}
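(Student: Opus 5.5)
We compare the size of $\bff(p+h)-\bff(p)$ along directions with $\hat h$ far from $\pm\hat p_\perp$ against its size along $h=\pm\epsilon\hat p_\perp$. The minimum over the full circle cannot be attained in the first set once $\epsilon$ is small, so it is attained in the cone $|\hat h^\top\hat p|\le C\epsilon$.

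\emph{Upper bound for the minimum.} Write $e:=\hat p_\perp$ and take $h=\epsilon e$. We Taylor expand $\bff(p+\epsilon e)-\bff(p)$ to third order, using the notation of the definition above, which for the Hessian $\Hw$ reads
\[ \bff(p+\epsilon e)-\bff(p)=\epsilon\Hw(p)e+\tfrac{\epsilon^2}{2}\nabla_e\Hw(p)e+\tfrac{\epsilon^3}{6}\nabla_{ee}\Hw(p)e+O(\epsilon^4). \]
By A) and C) of Lemma \ref{lem: Hw higer derivatives} the first two terms vanish, since $e$ is a multiple of $p_\perp$. Hence $|\bff(p+\epsilon e)-\bff(p)|\le K\epsilon^3$, with $K$ depending on bounds of $\nabla_{ee}\Hw$ and of the fourth derivatives near $p$. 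These bounds hold because $\bff\in C^\infty$ by Proposition \ref{prop: h is caloric}. Consequently the minimum over $|h|=\epsilon$ is at most $K\epsilon^3$.

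\emph{Lower bound away from the singular direction.} Write $\hat h=\alpha\hat p+\beta e$ with $\alpha^2+\beta^2=1$. By A) and B) we have $\Hw(p)\hat h=-\alpha e^{2t_0}v_t\hat p$, and Proposition \ref{prop: key 2} gives $\lambda:=e^{2t_0}|v_t|>0$. Therefore
\[ |\bff(p+h)-\bff(p)|\ge \epsilon\lambda|\alpha|-M\epsilon^2, \]
where $M$ bounds $\tfrac12|\nabla_{\hat h}\Hw|$ uniformly in a neighbourhood of $p$. We now choose $C:=(K\epsilon_0+M+1)/\lambda$. If $|\alpha|>C\epsilon$, then the right-hand side exceeds $(K\epsilon_0+1)\epsilon^2\ge K\epsilon^3+\epsilon^2>K\epsilon^3$ for $0<\epsilon\le\epsilon_0$. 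Such $h$ therefore cannot be minimizers, and the minimum equals the minimum restricted to $|\hat h^\top\hat p|\le C\epsilon$. The restricted set is compact and contains $\pm\epsilon e$, so both minima are attained and the equality holds.

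\emph{Main obstacle.} The only delicate step is the cancellation of the $\epsilon^2$ term along $e$. A crude bound there would only give $O(\epsilon^2)$ for the minimum, and that is not enough to beat the lower bound $\epsilon\lambda|\alpha|-M\epsilon^2$ when $|\alpha|$ is of order $\epsilon$. Identity C) is exactly what removes this term. The constant $C$ must also be chosen uniformly for $\epsilon\le\epsilon_0$. This is possible because all the derivative bounds are taken on a fixed compact neighbourhood of $p$, with $\epsilon_0$ small enough that $\overline{B(p,\epsilon_0)}\subset B$.
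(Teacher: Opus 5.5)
Your proof is correct, and its skeleton is the paper's. Outside the cone you use the first-order lower bound $\epsilon\lambda|\alpha|-M\epsilon^2$, and you compare it with the value at $h=\epsilon\hat{p}_\perp$, which lies inside the cone.

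You differ only in how you bound that comparison value, and there your closing remark is wrong: the cancellation of the $\epsilon^2$ term via C) is not needed. The paper uses only A), B) and the second-order remainder, $|\bff(p+\epsilon\hat{p}_\perp)-\bff(p)|\le M\epsilon^2$. It then takes $C=2M/\lambda$, written there as $C=\frac{1}{\alpha}\max_{|e|=1,\,|q-p|\le\epsilon_0}|\nabla_e\Hw(q)e|$. With this choice, $|\hat{h}^\top\hat{p}|>C\epsilon$ gives $\epsilon\lambda|\hat{h}^\top\hat{p}|-M\epsilon^2>M\epsilon^2$, so every such $h$ is beaten by $\epsilon\hat{p}_\perp$. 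Your worry concerns $|\alpha|$ of order $\epsilon$ with a small constant. The lemma, however, only has to exclude $|\alpha|>C\epsilon$ for a $C$ you get to choose, so an $O(\epsilon^2)$ comparison value suffices.

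Your sharper $K\epsilon^3$ bound costs identity C) and a third-order expansion. It does buy something the paper leaves implicit. Combined with the later estimate $\min_{|h|=\epsilon}|\bff(p+h)-\bff(p)|\ge(\tfrac13e^{4t_0}|v_t|+O(\epsilon))\epsilon^3$, it shows the cubic rate is actually attained at points where $v_\psi=0$. Hence the exponent $1/3$ cannot be improved there, which is the ``exactly $C^{1/3}$'' claim the paper makes for the square without proof.
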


\begin{proof}
	Set
	\[ C = \frac{1}{\alpha}\max_{\substack{|e| = 1\\
			|q-p|\leq\epsilon_0}}\left|\nabla_e\Hw(q)e\right|,\qquad \alpha = e^{2t_0}|v_t(\psi_0, t_0)| > 0. \]
	Since $ v_\psi = 0 $, the Jacobian matrix of $ \bff $ at $ p $ is $ \Hw(p) = \pm\alpha\hat{p}\hat{p}^\top $. Let $ 0 < \epsilon\leq\epsilon_0 $ and chose $ h $ such that $ |h|=\epsilon $, but with $ \left|\hat{h}^\top\hat{p}\right| > C\epsilon $. Then there is a $ p'\in B_{\epsilon_0}(p) $ such that
	\begin{align*}
		\left|\bff(p+h) - \bff(p)\right|
		&= \left|\Hw(p)h + \frac{1}{2}\nabla_h\Hw(p')h\right|\\
		&= \left|\pm\epsilon\alpha\hat{p}\hat{p}^\top\hat{h} + \frac{\epsilon^2}{2}\nabla_{\hat{h}}\Hw(p')\hat{h}\right|\\
		&\geq \epsilon\alpha|\hat{h}^\top\hat{p}| - \frac{\epsilon^2}{2}\left| \nabla_{\hat{h}}\Hw(p')\hat{h}\right|\\
		&> \epsilon^2\alpha C - \epsilon^2\alpha C/2\\
		&= \epsilon^2\alpha C/2.
	\end{align*}
	On the other hand,
	\[ \left|\bff(p+\epsilon\hat{p}_\perp) - \bff(p)\right|
	= \left|0 + \frac{\epsilon^2}{2}\nabla_{\hat{p}_\perp}\Hw(p'')\hat{p}_\perp\right| \leq \epsilon^2\alpha C/2,\]
	which shows that the minimum cannot be found outside the double cone $ \left|\hat{h}^\top\hat{p}\right| \leq C\epsilon $. The claim follows.
\end{proof}

Next, for $ h\in\mR^2 $ with $ |h| =\epsilon $ we can write
\[ h = \epsilon\hat{h} = \epsilon\left(\hat{h}^\top\hat{p}\cdot\hat{p} + \hat{h}^\top\hat{p}_\perp\cdot\hat{p}_\perp
\right) = \epsilon\left(h_1\hat{p} + h_2\hat{p}_\perp
\right). \]
If $ \left|\hat{h}^\top\hat{p}\right| \leq C\epsilon $ then $ h_1 := \hat{h}^\top\hat{p} $ is $ O(\epsilon) $ and $ h_2 := \hat{h}^\top\hat{p}_\perp\to 1 $ as $ \epsilon\to 0 $. The tensor $ \nabla_h\Hw(p) h $ is bi-linear in $ h $ and thus expands according to the binomial formula as
\[ \frac{1}{\epsilon^2}\nabla_h\Hw(p) h
=  h_1^2\nabla_{\hat{p}}\Hw(p)\hat{p} + 2h_1h_2\nabla_{\hat{p}}\Hw(p)\hat{p}_\perp + h_2^2\nabla_{\hat{p}_\perp}\Hw(p)\hat{p}_\perp, \]
which in general is $ \nabla_{\hat{p}_\perp}\Hw(p)\hat{p}_\perp + O(\epsilon) $ as $ \epsilon\to 0 $. But when $ p = \bfp(\psi_0, t_0) $ is a point where $ v_\psi(\psi_0, t_0) = 0 $, we have $ \nabla_{\hat{p}_\perp}\Hw(p)\hat{p}_\perp = 0 $ by Lemma \ref{lem: Hw higer derivatives} C), and multiplying on the left with $ \hat{p}_\perp $ gives
\begin{equation}\label{eq: Hw first order perp}
	\begin{aligned}
	\frac{1}{\epsilon^2}\hat{p}_\perp^\top\nabla_h\Hw(p) h
	&=  h_1^2\hat{p}_\perp^\top\nabla_{\hat{p}}\Hw(p)\hat{p} + 2h_1h_2\hat{p}_\perp^\top\nabla_{\hat{p}}\Hw(p)\hat{p}_\perp + h_2^2\hat{p}_\perp^\top\nabla_{\hat{p}_\perp}\Hw(p)\hat{p}_\perp\\
	&=  h_1^2\hat{p}_\perp^\top\nabla_{\hat{p}}\Hw(p)\hat{p} + 0 + 0\\
	&= O(\epsilon^2)
	\end{aligned}
\end{equation}
when $ \epsilon\to 0 $.\footnote{In the second term the symmetries $ \nabla_a\Hw^\top = \nabla_a\Hw $ and $ \nabla_a\Hw\, b = \nabla_b\Hw\, a $ are used.} 
Similarly, $ \nabla_{hh}\Hw(p) h $ is tri-linear in $ h $, and only the fourth and last term in the polynomial comes without $ \epsilon $ factors. That is,
\[ \frac{1}{\epsilon^3}\nabla_{hh}\Hw(p) h = \nabla_{\hat{p}_\perp\hat{p}_\perp}\Hw(p)\hat{p}_\perp + O(\epsilon). \]
When $ v_\psi = 0 $, we get
\begin{equation}\label{eq: Hw second order perp}
	\frac{1}{\epsilon^3}\hat{p}_\perp^\top\nabla_{hh}\Hw(p) h = 2\frac{v_t}{|p|^4} + O(\epsilon) = 2e^{4t_0}v_t + O(\epsilon)
\end{equation}
by Lemma \ref{lem: Hw higer derivatives} D).

Putting it all together, and using that $ |a| = \max_{|e|=1}e^\top a $, we get that at points $ p = \bfp(\psi_0, t_0) $ where $ v_\psi(\psi_0, t_0) = 0 $,
\begin{align*}
	\min_{|h|=\epsilon}&\frac{\left|\bff(p+h) - \bff(p)\right|}{|h|^3}\\
	&= \frac{1}{\epsilon^3} \min_{\substack{|h|=\epsilon\\
			\left|\hat{h}^\top\hat{p}\right|\leq C\epsilon}} |\bff(p+h) - \bff(p)|\\
	&\geq \frac{1}{\epsilon^3} \min_{\substack{|h|=\epsilon\\
			\left|\hat{h}^\top\hat{p}\right|\leq C\epsilon}} \sgn(v_t)\hat{p}_\perp^\top\left[ \Hw(p)h + \frac{1}{2!}\nabla_h\Hw(p)h + \frac{1}{3!}\nabla_{hh}\Hw(p) h + O(\epsilon^4)\right]\\
	&= \frac{1}{\epsilon^3} \left[ 0 + \frac{1}{2!}O(\epsilon^4) + \frac{\epsilon^3}{3!}\left(2e^{4t_0}|v_t| + O(\epsilon)\right) + O(\epsilon^4)\right]\\
	&= \frac{1}{3}e^{4t_0}|v_t| + O(\epsilon).
\end{align*}
Thus for every $ h $ small enough, we have, for, say $ C = \frac{1}{4}e^{4t_0}|v_t| > 0 $,
\[ \left|\bff(p+h) - \bff(p)\right| \geq C|h|^3. \]

Finally, Let $ x\in\Omega $ and set $ (\psi,t)\in D $ and $ p\in B $ such that $ p = \bfp(\psi,t) = \nabla u^\top(x) $. If $ v_\psi(\psi,t)\neq 0 $ then $ u $ is smooth near $ x $. If $ v_\psi(\psi,t) = 0 $, then for $ y $ sufficiently close to $ x $, $ q := \nabla u^\top(y) $ is close to $ p $ and
\begin{align*}
	\frac{\left|\nabla u(x) - \nabla u(y)\right|}{\left|x - y\right|^{1/3}}
	&= \frac{\left|p - q\right|}{\left|\bff(p) - \bff(q)\right|^{1/3}}\\
	&= \left(\frac{\left|p - q\right|^3}{\left|\bff(p) - \bff(q)\right|}\right)^{1/3}\\
	&\leq \frac{1}{C^{1/3}}.
\end{align*}
This completes the proof of the Theorem.

\section{Examples}

\subsection{The $ \infty $-potential in the square}

In \cite{Brustad+2026+43+59} it is shown that the (unique viscosity) solution of the Dirichlet problem
\begin{equation}\label{eq: Dirprob1}
\begin{cases}
\Delta_\infty u = 0\qquad &\text{in $\Omega\setminus\{(1,1)\}$},\\
u = 0 & \text{on $\partial\Omega$,}\\
u = 1 &\text{at $(1,1)$,}
\end{cases}
\end{equation}
in the square $\Omega = \{(x,y)\;|\; 0<x<2,\,0<y<2\}$ has an invertible gradient in the sub-square $\Omega_1 = \{(x,y)\;|\; 0<x<1,\,0<y<1\}$. Here, $ B_1 := \nabla u(\Omega_1) = \left\{(p,q)\,|\, 0<p^2 + q^2<1,\; p>0,\,q>0\right\} $ and the potential $ w(p) = p^\top\bff(p) - u(\bff(p)) $ of the inverse $ \bff\colon B_1\to\Omega_1 $ in $ (\psi, t) $-coordinates is shown to be the caloric function
\begin{equation}\label{eq:Wdef}
h(\psi, t) = \frac{8}{\pi}\left(\frac{e^{-4t}}{6}\sin(2\psi) + \frac{e^{-36t}}{210}\sin(6\psi) + \frac{e^{-100t}}{990}\sin(10\psi) + \cdots\right)
\end{equation}
in $ D = \left\{(\psi, t)\,|\, 0<\psi<\frac{\pi}{2},\; t>0\right\} $. Moreover, for $ v = -h - h_t $,
\[ v_\psi(\psi, t) = -\frac{4}{\pi}\vartheta(2\psi,e^{-16t}) \]
where $ \vartheta $ is the second Jacobi Theta function. Also, $ v_\psi $ is zero in $ D $ exactly when $ \psi = \pi/4 $, which corresponds to the diagonal in $ \Omega_1 $.

Since $ \bff $ is real-analytic it is locally Lipschitz, which in turn implies the condition \eqref{eq: reverse lipschitz} locally on $ \nabla u $. We therefore have the following Corollary to the Theorem.

\begin{corollary}
	The $ \infty $-potential in the square is $ C_{loc}^{1,\frac{1}{3}}. $
\end{corollary}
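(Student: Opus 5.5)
The plan is to reduce the Corollary to the Theorem. Three things must be checked on the sub-square $ \Omega_1 $: $ u $ is $ \infty $-harmonic there, $ \nabla u\neq 0 $ there, and the metric injectivity \eqref{eq: reverse lipschitz} holds locally.

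First, $ u $ is $ \infty $-harmonic in $ \Omega\setminus\{(1,1)\}\supset\Omega_1 $ by \eqref{eq: Dirprob1}. By the cited result, $ \nabla u $ maps $ \Omega_1 $ bijectively onto $ B_1 $, and $ B_1 $ avoids the origin since $ p^2+q^2>0 $. Hence $ \nabla u\neq 0 $ in $ \Omega_1 $.

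Second, I would show that the inverse $ \bff\colon B_1\to\Omega_1 $ is real-analytic. By Lemma \ref{lem: existence of potential}, $ \bff = \nabla w^\top $, and $ w(\bfp(\psi,t)) = h(\psi,t) $ is given by the series \eqref{eq:Wdef}. For each compact set in $ D $ we have $ t\geq t_*>0 $. On such a set the coefficients decay like $ e^{-(4k+2)^2 t_*} $, and this dominates any polynomial factor produced by differentiation. So the series converges together with all derivatives, and it extends holomorphically in $ (\psi,t) $. The map $ \bfp $ is an analytic diffeomorphism $ D\to B_1 $, so $ w $ is analytic on $ B_1 $. Alternatively, one can use \eqref{eq: x in terms of h}, which gives $ \bff\circ\bfp $ explicitly in terms of $ h_\psi $ and $ h_t $.

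Third, since $ \bff $ is $ C^1 $, it is Lipschitz on every convex compact $ K\subset B_1 $, say with constant $ L_K $. For any point $ x_0\in\Omega_1 $, choose a small disc $ K $ around $ \nabla u(x_0) $ inside $ B_1 $. Let $ \Omega' := \bff(\mathrm{int}\,K) $, which is an open neighbourhood of $ x_0 $ because $ \bff $ is a homeomorphism. For $ x,y\in\Omega' $ this gives
\[ |x-y| = |\bff(\nabla u(x))-\bff(\nabla u(y))| \leq L_K|\nabla u(x)-\nabla u(y)|, \]
which is \eqref{eq: reverse lipschitz} on $ \Omega' $ with $ C_1 = L_K^{-1} $.

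Applying the Theorem on each such $ \Omega' $ yields a local $ 1/3 $-Hölder bound, hence $ u\in C^{1,\frac13}_{loc}(\Omega_1) $.

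The same holds on the other three sub-squares by the symmetry of \eqref{eq: Dirprob1} under reflections of the square. On the symmetry lines $ x=1 $ and $ y=1 $ away from $ (1,1) $, and near $ \partial\Omega $ if needed, one needs separate treatment. Presumably there the gradient is smooth because $ v_\psi\neq0 $, or the argument is repeated on a rotated sub-square.

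The step I expect to be the real obstacle is the interior lines $ x=1 $ and $ y=1 $. There $ \nabla u $ need not be injective across neighbouring sub-squares, since reflections produce gradient values from different quadrants. The claim may need to be read as regularity in the open region where the Theorem applies, together with a separate argument on these lines. The analyticity of $ \bff $ itself is routine, because the exponential damping gives uniform convergence of all derivatives.
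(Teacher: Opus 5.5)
Your argument on the open sub-squares is the paper's own: $\bff$ is real-analytic, hence locally Lipschitz, which gives \eqref{eq: reverse lipschitz} on the small sets $\Omega'=\bff(\mathrm{int}\,K)$, and the Theorem then applies there. You simply make the localization explicit. One small repair: you cannot get $\bff=\nabla w^\top$ from Lemma \ref{lem: existence of potential}. That lemma assumes the Lipschitz bound on $\bff$ which you then derive from its conclusion, so the step is circular. Take $\nabla w=\bff^\top$, and with it the analyticity of $\bff$, from the cited description of $w$ as the potential of $\bff$, as the paper does.

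The genuine gap is the medians, and neither of your suggested fixes can work there.
\begin{itemize}
\item The gradient is constant on each median. The $\infty$-potential is $1$-Lipschitz, so on $\{y=1,\ 0<x<1\}$ you have $x=1-|(x,1)-(1,1)|\le u(x,1)\le \dist((x,1),\partial\Omega)=x$. Hence $\nabla u\equiv[1,0]$ along the whole segment, and likewise $\nabla u\equiv[0,1]$ on $\{x=1,\ 0<y<1\}$.
\item It follows that no neighbourhood of a median point has an injective gradient. The Theorem therefore cannot be applied on any subdomain containing such a point, rotated or not.
\item These points are not images of points of $D$ at all. Relative to $\Omega_1$, each median segment collapses to a corner $(\psi,t)=(0,0)$ or $(\pi/2,0)$ of $\overline{D}$, so the condition ``$v_\psi\neq 0$'' has no meaning there.
\end{itemize}
The paper closes this part with a separate input, not with the Theorem: it records that $\nabla u$ is (at least) Lipschitz near the medians, which trivially gives the local $1/3$-Hölder bound there. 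Without that fact, or a proof of it (for instance from the behaviour of $h$ near the corners of $D$), your argument only yields $C^{1,\frac13}_{loc}$ away from the medians.
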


More specifically, the gradient is (at least) Lipschitz near the medians ($ x $ or $ y=1 $), it is (exactly) $ C^{1/3} $ near the diagonals, and real-analytic elsewhere.

\subsection{Aronsson's solution}
For $ u(x,y) = \frac{3}{4}\left(x^{4/3} - y^{4/3}\right) $,
\[ [p, q] = \nabla u(x,y) = [x^{1/3}, - y^{1/3}].\]
This solves to
\[ [x, y]^\top = \bff(p, q) = [p^3, -q^3]^\top, \]
which is the gradient of
\[ w(p,q) = [p,q]\bff(p,q) - u(\bff(p,q)) = p^4 - q^4 - \frac{3}{4}(p^4 - q^4) = \frac{1}{4}(p^4 - q^4). \]
Next,
\[ v(\psi, t) = u(\bff(e^{-t}\cos\psi, e^{-t}\cos\psi)) = \frac{3}{4}e^{-4t}(\cos^4\psi - \sin^4\psi) = \frac{3}{4}e^{-4t}\cos 2\psi \]
with $ v_\psi = -\frac{3}{2}e^{-4t}\sin 2\psi $, which confirms that $ u $ is non-smooth (outside the origin) if and only if the direction of the gradient is $ \{v_\psi = 0\} = \{\psi = k\pi/2\} $.

\appendix
\section{\\Proof of Lemma \ref{lem:fund}}

I explicitly state the key property needed for the proof of Lemma \ref{lem:fund}. 
\begin{lemma}\label{lem: gradient increase}
	Let $ \Omega\subseteq\mR^n $, $ n \geq 2 $, be open and assume that $ u\in C^1(\Omega) $ is $ \infty $-subharmonic in $ \Omega $. i.e., $ \Delta_\infty u\geq 0 $ in the viscosity sense.
	
	Let $ x_0\in\Omega $ and let $ 0 < r < \dist(x_0,\partial\Omega) $. If $ x\in\partial B_r(x_0) $ satisfies
	\[ u(x) = \max_{|z-x_0|\leq r}u(z) = \max_{|z-x_0|= r}u(z),\footnote{Due to the maximum principle, this second equality is always true.} \]
	then
	\[ |\nabla u(x_0)|\leq\frac{u(x) - u(x_0)}{r}\leq|\nabla u(x)|. \]
\end{lemma}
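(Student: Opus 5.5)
The plan is to compare $u$ with the affine function built from its gradient at the endpoints, using the $\infty$-subharmonicity only through comparison with cones. This is the classical ``comparison with cones'' characterization: an $\infty$-subharmonic function $u$ in $\Omega$ satisfies, for every ball $\overline{B_r(x_0)}\subset\Omega$ and every $y\in B_r(x_0)$,
\[ u(y)\le u(x_0)+\max_{|z-x_0|=r}\frac{u(z)-u(x_0)}{r}\,|y-x_0| . \]
I will take this (and its variant with vertex at any interior point) as the known equivalent of $\Delta_\infty u\ge 0$ in the viscosity sense. Write $M:=u(x)=\max_{\overline{B_r(x_0)}}u$ and $L:=(M-u(x_0))/r$.

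For the first inequality, apply the cone comparison with vertex $x_0$. For $y=x_0+se$, $|e|=1$ and $0<s<r$, it gives $u(x_0+se)-u(x_0)\le Ls$. Dividing by $s$ and letting $s\to0$, which is allowed since $u\in C^1$, yields $\nabla u(x_0)\cdot e\le L$ for every unit $e$. Choosing $e=\widehat{\nabla u}(x_0)$ (the case $\nabla u(x_0)=0$ is trivial) gives $|\nabla u(x_0)|\le L$.

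For the second inequality I use the point $x$ on the sphere where the maximum is attained, and look along the segment from $x_0$ to $x$. The plan is to use the cone comparison in the ``increasing slope'' form: the function
\[ r'\mapsto S(r'):=\frac{\max_{|z-x_0|=r'}u(z)-u(x_0)}{r'} \]
is non-decreasing for $0<r'<\dist(x_0,\partial\Omega)$, which is a standard consequence of comparison with cones. I then argue as follows.
\begin{itemize}
\item For $\rho<r$ slightly larger than $r$ is not available, so instead I use the point $y_s:=x-s\hat e$ with $\hat e:=(x-x_0)/r$ and apply the cone comparison on the ball $B_{r-s}(x_0)$ from the other side. Concretely, apply the comparison on the ball centred at $x$ of small radius $s$, with vertex $x$: one gets $u(y)\le u(x)+S_x(s)|y-x|$.
\item The cleaner route is the one-sided bound $u(x_0+\rho\hat e)\le u(x_0)+S(\rho)\rho$ together with $S(\rho)\le L$, combined with $u(x)=u(x_0)+Lr$. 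These give
\[ u(x)-u(x_0+\rho\hat e)\ge L(r-\rho), \]
so that $\nabla u(x)\cdot\hat e\ge L$ after dividing by $r-\rho$ and letting $\rho\to r$.
\end{itemize}
Hence $|\nabla u(x)|\ge L$.

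The only step needing care is the bound $u(x_0+\rho\hat e)\le u(x_0)+L\rho$ for $\rho<r$. This is exactly the cone comparison on $B_r(x_0)$ with vertex $x_0$, applied at $y=x_0+\rho\hat e$, so monotonicity of $S$ is not even needed. The main obstacle is therefore only justifying the cone-comparison characterization for viscosity $\infty$-subharmonic functions, which I cite from Crandall's notes. The footnoted equality $\max_{\overline B}u=\max_{\partial B}u$ follows from the same comparison, or from the maximum principle.
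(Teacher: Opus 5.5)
Your proof is correct.

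The paper gives no argument of its own here. It simply cites Lemma 4.1(d) and Lemma 4.3 of Crandall's notes, which belong to the general theory of functions enjoying comparison with cones, where no differentiability is assumed.

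You cite only the cone-comparison characterization of viscosity $\infty$-subharmonicity and do the rest by hand. The cone $u(x_0)+L|y-x_0|$, with $L=(u(x)-u(x_0))/r$, lies above $u$ on $\overline{B_r(x_0)}$ and touches it both at $x_0$ and at $x$. Differentiating at $x_0$ in the direction of $\nabla u(x_0)$ gives the first inequality. Taking the difference quotient at $x$ along the inward ray gives $\nabla u(x)\cdot\hat e\ge L$, hence the second.

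This route is shorter and self-contained. The $C^1$ hypothesis is exactly what lets you replace the local Lipschitz constant by a directional derivative and skip any monotonicity of the slope $S$. The cited lemmas, in exchange, hold for non-differentiable $\infty$-subharmonic functions.

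For a clean write-up:
\begin{itemize}
\item Delete your first bullet; it is an abandoned attempt.
\item Drop the appeal to $S(\rho)\le L$, since, as you note yourself, cone comparison on $B_r(x_0)$ gives the bound directly.
\item Say explicitly that the comparison is applied on $V=B_r(x_0)\setminus\{x_0\}$ with vertex $x_0\notin V$.
\item Note that $L\ge 0$ because $x_0\in\overline{B_r(x_0)}$ and $u(x)$ is the maximum there. This also covers the case $\nabla u(x_0)=0$.
\end{itemize}
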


\begin{proof}
	This is Lemma 4.1(d) together with Lemma 4.3 in \cite{Crandall2008}.
\end{proof}

\begin{proof}[Proof of Lemma \ref{lem:fund}]
	I mimic the proof of Proposition 6.2 in \cite{Crandall2008}.
	The idea is to construct a sequence $ (\gamma_k)_k $ of curves $ \gamma_k : [0, T_k] \to V $, and show that a limit curve $ \gamma $ has the desired properties.
	
	For a fixed small $\delta > 0 $ let $ [x^0, x^1, \dots, x^J] $ be a list of points in $ V $ defined recursively as $ x^0 = x $, and with $ x^{j+1} $ chosen so that
	\begin{equation}\label{eq:max}
	u(x^{j+1}) = \max_{|z - x^j| = \delta}u(z).
	\end{equation}
	
	This is when $ \dist(x^j,\partial V) > \delta $. If $ \dist(x^j,\partial V) \leq \delta $ we stop and set $ J := j $ and $ T := J\delta $. We have by Lemma \ref{lem: gradient increase} that
	\begin{equation}\label{eq:gradmono}
	|\nabla u(x^j)| \leq \frac{u(x^{j+1}) - u(x^j)}{\delta} \leq |\nabla u(x^{j+1})|.
	\end{equation}
	In particular, $ |\nabla u(x^j)| > 0 $ for all $ j $ and
	\begin{equation}\label{eq:Tupperbound}
	\max_{\partial V}u \geq u(x^J) \geq u(x^{J-1}) + \delta |\nabla u(x^{J-1})| \geq\cdots\geq u(x) + J\delta |\nabla u(x)|,
	\end{equation}
	confirming that $ J $ is finite.
	Clearly, also
	\begin{equation}\label{eq:Tlowerbound}
	\dist(x,\partial V) \leq \delta + |x^J - x| \leq \delta + \sum_{j=1}^{J} |x^j - x^{j-1}| = \delta + T.
	\end{equation}
	
	Since $ u $ is differentiable, the maximum in \eqref{eq:max} implies that the gradient of $ u $ at $ x^{j+1} $ is perpendicular to the tangent space of $ \partial B_\delta(x^j) $ at $ x^{j+1} $. That is, $ \nabla u(x^{j+1}) $ is parallel to $ x^{j+1} - x^j $, and thus
	\begin{equation}\label{eq:gradeq}
	\hu^\top(x^{j+1}) = + \widehat{(x^{j+1} - x^j)} = + \frac{x^{j+1} - x^j}{\delta}
	\end{equation}
	where the "+" follows by continuity of $ \nabla u $.
	
	Now, let $ \delta_k\searrow 0 $ as $ k\to\infty $ and add a subscript $ k $ to the constructions above. For each $ k $ let $ \gamma_k : [0, T_k] \to V $ be the 1-Lipschitz piecewise linear curve connecting the points $ (x_k^j)_j $:
	\[ \gamma_k(s) = x_k^j + (s - j\delta_k)\frac{x_k^{j+1} - x_k^j}{\delta_k},\qquad j\delta_k\leq s\leq (j+1)\delta_k,\quad j=0,\dots,J_k-1. \]
	By \eqref{eq:gradeq}, and by writing $ j = \lfloor s/\delta_k\rfloor $, the curve takes the form
	\[ \gamma_k(s) = x_k^{\lfloor s/\delta_k\rfloor} + \left(s - \lfloor \frac{s}{\delta_k}\rfloor\delta_k\right)\hu^\top\left(x_k^{\lceil s/\delta_k\rceil}\right),\qquad 0\leq s\leq T_k = J_k\delta_k, \]
	with derivative
	\begin{equation}\label{eq:ae}
	\gamma_k'(s) = \hu^\top\left(x_k^{\lceil s/\delta_k\rceil}\right) = \hu^\top\left(\gamma_k(\lceil s/\delta_k\rceil\delta_k)\right)
	\end{equation}
	almost everywhere on $ [0, T_k] $.
	
	Set $ T := \liminf_{k\to\infty}T_k $. For $ \epsilon>0 $ we have $ T_k \geq T-\epsilon $ for all $ k $ greater than some $ K_\epsilon $.
	Arzelà-Ascoli ensures a 1-Lipschitz curve $ \gamma : [0,T-\epsilon]\to V $ and a subsequence $ (\gamma_{k_i})_i $, $ k_i>K_\epsilon $, such that $ \gamma_{k_i} $ converges uniformly to $ \gamma $ on $ [0,T-\epsilon] $. But since also $ \nabla u $ is uniformly continuous in $ \overline{V} $ and $ \lceil s/\delta_k\rceil\delta_{k_i} $ goes uniformly to $ s $, it follows that the right-hand side of \eqref{eq:ae} converges uniformly to $ \hu^\top\left(\gamma(s)\right) $ on $ [0, T-\epsilon] $. The limit can then be taken outside an integral, and thus
	\begin{align*}
	\int_0^s\hu^\top\left(\gamma(\sigma)\right)\dd\sigma
	&= \lim_{i\to\infty} \int_0^s\hu^\top\left(\gamma_{k_i}(\lceil \sigma/\delta_{k_i}\rceil\delta_{k_i})\right)\dd\sigma\\
	&= \lim_{i\to\infty} \int_0^s \gamma_{k_i}'(\sigma)\dd\sigma\\
	&= \lim_{i\to\infty} \gamma_{k_i}(s) - \gamma_{k_i}(0)\\
	&= \gamma(s) - x
	\end{align*}
	for all $ s\in [0, T-\epsilon] $. It follows that $\gamma$ is differentiable with the continuous derivative $\gamma'(s) = \hu^\top\left(\gamma(s)\right)$.
	
	By the Lipschitz continuity and the definition of $ T_k $,
	\[ \dist(\gamma_{k_i}(T-\epsilon), \partial V) \leq \dist(\gamma_{k_i}(T_{k_i}), \partial V) + \epsilon \leq \delta_{k_i} + \epsilon. \]
	Decreasing $ \epsilon $ thus extends $ \gamma(T-\epsilon) $ towards $ \partial V $, and in the limit $ \gamma(T)\in\partial V $.
	
	Finally, part (iii) follows by \eqref{eq:Tupperbound} and \eqref{eq:Tlowerbound}, (v) follows by \eqref{eq:gradmono}, and $ s\mapsto u(\gamma(s)) $ is convex since then
	\[ \frac{\dd}{\dd s} u(\gamma(s)) = \nabla u(\gamma(s))\gamma'(s) = |\nabla u(\gamma(s))| \]
	is non-decreasing.
	
\end{proof}


\bibliographystyle{alpha}
\bibliography{/Users/karlkb/Documents/references.bib}


\end{document}